\def\be{\begin{equation}}
\def\ee{\end{equation}}
\def\l{\langle}
\def\r{\rangle}
\def\p{\parallel}
\def\R{I\!\!R}
\def\cR{{\cal R}}
\def\cN{{\cal N}}
\def\ba{\bar{A}}
\def\rank{\mathrm{rank}}
\def\dim{\mathrm{dim}}
\def\diag{\mathrm{diag}}
\newtheorem{remark}{Remark}
\newtheorem{lemma}{Lemma}
\newtheorem{proposition}{Proposition}
\begin{document}

\begin{center}

\vspace*{0.2cm}
{\large \bf On numerical solution of full rank linear systems}

\centerline{\today}

\vspace*{0.2cm}
\centerline
{A. Dumitra\c sc$^\spadesuit$, Ph. Leleux$^{++}$, C. Popa$^{+}$, D. Ruiz$^{++}$ and U. R\" ude$^\spadesuit$,$^{++}$ }
\end{center}

\begin{center}
{\small 
$^\spadesuit$FAU Erlangen-Nurnberg, Germany\\
$^{++}$CERFACS Toulouse, France\\
$^+$Ovidius University of Constanta, Romania %\\
%$^{\dag}$Institute of Statistical Mathematics and Applied Mathematics of the Romanian Academy and 
%Academy of Romanian Scientists,  Bucharest, Romania
}
\end{center}
\footnotetext{For C. Popa this paper was partially supported by the DAAD Grant nr. 57440915/2019}

\begin{abstract}
Matrices can be augmented by adding additional columns
such that a partitioning of the matrix in 
blocks of rows defines mutually orthogonal subspaces.
This augmented system can then be solved efficiently 
by a sum of projections onto these subspaces. The equivalence
to the original linear system is ensured by
adding additional rows to the matrix in a specific form.
The resulting solution method is known as the augmented block
Cimmino method. Here this method is extended to full rank
underdetermined systems and to overdetermined systems.
In the latter case, rows of the matrix, not columns,
must be suitably augmented. 
The article presents an analysis of these methods.
\end{abstract}

\vspace*{0.2cm}
{\bf Keywords:} full rank linear systems, extended system, orthogonal row blocks, orthogonal column blocks, least squares problems, minimal norm solution

\vspace*{0.cm}
{\bf MSC 2010 Classifications:} 65F10, 65F30
%%%%%%%%%%%%%%%%%%%%%%%%%%%%%%%%%%%%%%%%%%%%%%%%%%%%%%%%5
%%%%%%%%%%%%%%%%%%%%%%%%%%%%%%%%%%%%%%%%%%%%%%%%%%%%%%%%%%%%%%

\section{Introduction}
%\label{intro}
Designing efficient numerical solutions for large, sparse, 
ill-conditioned linear systems of equations % is a big 
remains a challenge for % nowadays 
scientific computing, where larger and larger systems must be solved.
% But, 
Unfortunately, there are no \textit{universal} solvers, 
such as Gaussian elimination (as a direct method) or 
Kaczmarz (as an iterative method) %which 
that can solve every square nonsingular system of linear equations
efficiently without supplementary assumptions. 
Usually, for obtaining/designing efficient solvers,
we must exploit the specific information, such as the structure or special properties,
of the problem matrix.
Often these properties are directly related to the concrete/real world problem
that we want to solve.
This is the setting of this article. 
We propose and theoretically analyze such a specific solver.
The method is based on the construction of an augmented problem,
adding rows or columns to the original matrix. 
In specific cases, i.e.~for specific classes of problems, these augmented systems
can be solved with  a well designed parallel algorithm.
In such cases, solving the augmented larger system can be an efficient method
to produce a solution of the original system.

% This procedure 
The method of interest in our article was first considered % for the first time 
in \cite{duffsisc} for square nonsingular systems of linear equations. 
In the present % paper, we replay and 
article we extend and develop % those ideas 
these methods for full rank over- and underdetermined systems of linear equations.
In particular, we % We concentrate and 
provide a complete theoretical analysis of the proposed procedure. 
Efficient numerical implementations, %of it, 
as well as computational considerations on specific classes of problems are 
described in \cite{duffsisc}, \cite{adumi}, and \cite{adumi1}.

The paper is organized as follows. In section \ref{prelim} we introduce the basic notation and definitions necessary in the rest of the paper, and briefly describe the main ideas of the procedure proposed in \cite{duffsisc}. In section \ref{consist} we adapt and extend the results from \cite{duffsisc} to underdetermined full row rank linear systems (which are always consistent). In section \ref{inconsist} we adapt and develop the construction from section \ref{consist} to the case of overdetermined full column rank linear systems. These are usually inconsistent and we must reformulate them as linear least squares problems. 
This new aspect makes the theoretical analysis more elaborated than in the consistent case.
The paper finishes with final comments on open problems and further research directions in the field.
%
%%%%%%%%%%%%%%%%%%%%%%%%%%%
%
\section{Preliminaries}
\label{prelim}
We start  this introductory section presenting the notations and definitions used in the rest of the paper.  By $\l \cdot, \cdot \r, \p \cdot \p$ we will denote the Euclidean scalar product and norm on some space $\R^q$. If $A$ is a real $m \times n$ matrix we will denote by $A^T, a_i, a^j, \rank(A), \cR(A),$ $\cN(A), A^+$ the transpose, $i$-th row, $j$-th column, rank, range, null space and Moore-Penrose pseudoinverse of it.
The vectors $x \in \R^q$ will be considered as column vectors, thus with the above rows and columns, the matrix $A$ can be written as
\be
\label{0-1}
A = \left[ 
		\begin{matrix}
		(a_1)^T \\
		(a_2)^T \\
		 \dots  \\
		(a_m)^T \\
		\end{matrix}
		\right] ~~\textrm{or}~~ A = [a^1 a^2 \dots a^n ].
\ee
If for $1 \leq p < m$, we split the rows indices as
	$1 \leq m_1 < m_2 < \dots < m_p = m$
and the subsets
\be
\label{0-2}
N_1 = \{ 1, \dots, \mu_1 \}, N_2 = \{ \mu_1+1, \dots, \mu_2 \}, \dots, 
N_p = \{ \mu_{p-1}+1, \dots, \mu_p \},
\ee
and define the row blocks $A_1, A_2, \dots, A_p$ of $A$, without overlapping rows as 
\be
\label{0-33p}
A_1 = \left[ 
		\begin{matrix}
		(a_1)^T \\
			 \dots  \\
		(a_{\mu_1})^T \\
		\end{matrix}
		\right], ~~A_2 = \left[ 
		\begin{matrix}
		(a_{\mu_1+1})^T \\		 
		\dots  \\
		(a_{\mu_2})^T \\
		\end{matrix}
		\right], ~~A_p = \left[ 
		\begin{matrix}
		(a_{\mu_{p-1}+1})^T \\		 
		\dots  \\
		(a_{\mu_p})^T \\
		\end{matrix}
		\right],
\ee
then $A$ and $A^T$ will be written as 
\be
\label{0-4p}
 A = \left[ 
		\begin{matrix}
		A_1  \\
		\dots \\
		A_p
		\end{matrix}
		\right], ~~\textrm{or}~~ A = [A_1^T  A_2^T \dots A_p^T ].
\ee
If $m=n$ and $A$ is invertible, $A^{-1}$ will denote its inverse.
The orthogonal projector onto a vector subspace $S \in \R^q$
will be written as $P_S$ and the dimension of $S$ as 
$\dim(S)$. $I_q, ~O_q$ will stand for the unit,
respectively zero matrix of order $q$,
and $D = \diag(\delta_1, \delta_2, \dots, \delta_n)$ 
will denote the diagonal matrix
$$
D =
\left[ 
		\begin{matrix}
		\delta_1 & 0 & \dots & 0\\
		0 & \delta_2 & \dots & 0 \\
		 & & \ddots & \\
		0 & 0 &\dots &\delta_n \\
		\end{matrix}
		\right] .
$$
If $A: m \times n, b \in \R^m$ and $b \in \cR(A)$, we will denote as 
\be
\label{intro-1}
Ax=b
\ee
the corresponding system of linear equations, by $S(A; b)$ the set of its (classical) solutions  and by $x_{LS}$ the (unique) minimal norm one. 
In the general case, for $b \in \R^m$, the system (\ref{intro-1}) will be formulated in a least squares sense: find $x \in \R^n$ such that 
\be
\label{intro-2}
\p Ax-b \p = \min \{ \p Az-b \p, z \in \R^n \},
\ee
and denote by $LSS(A; b)$ the set of its (least squares) solutions  and by $x_{LS}$ the (unique) minimal norm one. 
We know that (see e.g. \cite{popabook}) in both cases (\ref{intro-1}) and  (\ref{intro-2})
\be
\label{intro-3}
x_{LS} = A^+ b ~~{\rm and}~~ A x_{LS} = P_{\cR(A)}(b).
\ee

In the rest of this section we will briefly % describe 
recapitulate the augmentation procedure %firstly 
that was first proposed in \cite{duffsisc}.
We start from a square nonsingular system of linear equations
\be
\label{al-11}
\tilde{A} \tilde{x} = \tilde{b},
\ee
$\tilde{A}: m \times m, ~\tilde{b} \in \R^m$ and reorder it as 
\be
\label{al-12}
A x = b,
\ee
where 
\be
\label{1}
A =  P \tilde{A} Q =  \left[ 
		\begin{matrix}
		A_1  \\
		\dots \\
		A_p
		\end{matrix}
		\right], ~b=P \tilde{b}, ~x = Q^T \tilde{x},
\ee
with $A_1, \dots, A_p$ row blocks as in (\ref{0-33p}),
\be
\label{111}
 A_i: m_i \times n, ~~m_1 + \dots + m_p = m,
\ee
and $P, Q: m \times m$ permutation  matrices. 
This reordered system is then augmented to 
\be
\label{al-21}
\left[ 
		\begin{matrix}
		A & \Gamma \\
		0 & I_q \\
		\end{matrix}
		\right] \left[ 
		\begin{matrix}
		x \\
		y \\ 
		\end{matrix}
		\right] = \left[ 
		\begin{matrix}
		b \\
		0 \\ 
		\end{matrix}
		\right].
\ee
Note that the extended matrix   
$\ba: m \times \bar{m}, \bar{m}=m + q, q \geq 1$ has the block structure
\be
\label{3}
\ba = [ A ~~\Gamma ] = 
		 \left[ 
		\begin{matrix}
		\ba_1  \\
		\dots \\
		\ba_p
		\end{matrix}
		\right], ~\Gamma = \left[ 
		\begin{matrix}
		 \Gamma_1  \\
		\dots \\
		\Gamma_p
		\end{matrix} \right ], ~\bar{A}_i = [ A_i  ~~\Gamma_i ], i=1, \dots, p.  
\ee
Here, we have constructed the augmentation blocks
$\Gamma: m \times q, \Gamma_i: m_i \times q$, 
such that the row blocks $\ba_i$ are mutually orthogonal, i.e. 	
\be
\label{202}
	\ba_i \ba^T_j =0, ~~\forall \; i \neq j. 
\ee 
The solution $x$ is preserved by requiring that \textit{the augmentation variables} satisfy
$y=0$.
Unfortunately the last $q$ rows in (\ref{al-21}) that correspond to this condition 
will generally violate the orthogonality relation.
In particular,
the blocks $\bar{A} = [ A ~~\Gamma ]$ and $Y = [ 0 ~~I_q ]: q \times \bar{n}$
are not orthogonal.
Therefore, the authors in \cite{duffsisc} propose the modification
\be
\label{al-22}
\left[ 
		\begin{matrix}
		A & \Gamma \\
		B & S \\
		\end{matrix}
		\right] \left[ 
		\begin{matrix}
		x \\
		y \\ 
		\end{matrix}
		\right] = \left[ 
		\begin{matrix}
		b \\
		f \\ 
		\end{matrix}
		\right], 
\ee
 in which  the blocks 
\be
	\label{al-22p}
	\bar{A} = [ A ~~\Gamma ] ~{\rm and}~ W = [ B ~~S ]
\ee
are constructed to be orthogonal. Then, the (parallel) solution procedure for getting a solution for (\ref{al-11}) through (\ref{al-22}) is the following. 
\begin{itemize}
\item[(1.1)] The minimal norm solution of the system (\ref{al-22}), say
$		\left[ 
		\begin{matrix}
		x^* \\
		y^* \\ 
		\end{matrix}
		\right]
$, is computed through (see e.g.~\cite{horn})
\be \label{al-22p1}
		\left[ 
		\begin{matrix}
		x^* \\
		y^* \\ 
		\end{matrix}
		\right] = \left[ 
		\begin{matrix}
		\bar{A} \\
		W \\ 
		\end{matrix}
		\right]^+ \left[ 
		\begin{matrix}
		b \\
		f \\ 
		\end{matrix}
		\right] .
\ee
\item[(1.2)] Because of the mutual orthogonality of the blocks
$\bar{A}_i, i=1, \dots, p$ and $W$ we get
\be \label{all-22p2}
		\left[ 
		\begin{matrix}
		\bar{A} \\
		W \\ 
		\end{matrix}
		\right]^+ = \left[ 
		\begin{matrix}
		\bar{A}_1^+ ~\bar{A}_2^+ \dots ~\bar{A}_p^+ ~ W^+ \\ 
		\end{matrix}
		\right] .
\ee
Therefore, according to (\ref{al-22p1}) this gives us
\be \label{121}
		\left[ 
		\begin{matrix}
		x^* \\
		y^* \\ 
		\end{matrix}
		\right]  = \bar{A}^+ b + W^+ f = \bar{A}^+ 
			\left[ 
		\begin{matrix}
		b^1 \\
		b^2 \\ 
		\dots \\
		b^p \\
		\end{matrix}
		\right] + W^+ f = 
		\sum_{i=1}^p \bar{A}_i^+ b^i + W^+ f,
\ee
where
$b = \left[ 
		\begin{matrix}
		b^1 \\
		b^2 \\ 
		\dots \\
		b^p \\
		\end{matrix}
		\right]$
is the splitting of the vector $b$ with respect to the partitioning of $A$ in (\ref{1}).
\item[(1.3)] But, for an appropriate choice of the vector $f$, it can be shown that the $x^*$ part of the minimal norm solution in (\ref{121}) becomes a solution of the initial system (\ref{al-11}),
i.e.~$x^*= \tilde{A}^{-1}\tilde{b}$. 
%\todoURI{I have here expressed this in formulas, but in the light of my above remark, 
%I believe that it should read  $x^*= \tilde{A}^{+}\tilde{b}$ and be explicitly said, that it is the
%minimal norm solution. By referring back to (8) which was defined for square systems, this fact is obscured. Maybe we can state this more clearly.}
%
\item[(1.4)] The terms of the final sum in (\ref{121}) can be computed in parallel. An efficient implementation of such a computation, can transform this solution procedure into a very efficient solver (see \cite{duffsisc}, ...).
\end{itemize}
\begin{remark} \label{rerlg2}
		(i) For reordering $\tilde{A}$ as in (\ref{1}) the authors propose in \cite{duffsisc} (see also \cite{adumi} and \cite{adumi1}) the Cuthill-McKee algorithm from \cite{reid2006reducing}. This ensures a block bidiagonal or tridiagonal structure of $A$.
		
		(ii) For the augmentation of $A$ to $\bar{A}$ such that the augmented row blocks $\bar{A}_i$ are mutually orthogonal (see (\ref{202})) several procedures are proposed in \cite{duffsisc} (see also \cite{adumi} and \cite{adumi1}). 
\end{remark}
%
%%%%%%%%%%%%%%%%%%%%%%%
%
\section{Full row rank underdetermined systems}
\label{consist}
%\todoURI{Can we make an explicit statement that 
%in the underdetermined case we compute
%the least squares solution of the \emph{original} system? I do not see
%this explicitly stated so far.}
In this section we will % replay and complete 
recapitulate the results from \cite{duffsisc} and extend them in the case of full row rank underdetermined systems. 
Therefore the matrix $\tilde{A}$ in (\ref{al-11}) will be $m \times n$, with $m \leq n$ and $\rank(\tilde{A}) = m.$ {Hence, for any $\tilde{b} \in \R^m$ the system with $\tilde{A}$ and $\tilde{b}$ (of the form (\ref{al-11})) will be consistent, and so will be the system obtained after reordering $\tilde{A}$ (of the form (\ref{al-12}))}. The value $\bar{m}$ appearing as the second dimension of the matrix $\bar{A}$ in (\ref{3}) will be denoted by $\bar{n}$, with $\bar{n} = n + q, q \geq 1.$ We will also consider the projection operator 	$P = P_{\cR(\ba^T)}$, which under the mutual  orthogonality hypothesis (\ref{202}) is given by (see e.g.  \cite{horn})
\be \label{4}
	P = \bar{A}^+ \bar{A} = \sum_{i=1}^p P_{\cR(\ba_i^T)} ~\textrm{with}~ 
	P_{\cR(\ba_i^T)} =  \ba^+_i \ba_i.
\ee
\begin{proposition}
\label{lem1}
(i) If we set
\be \label{5}
	W = Y (I -P), ~{\rm where}~ Y = [ 0 ~I_q ], 
\ee
then the row block $W$ from (\ref{al-22p}) 
is orthogonal % on 
to $\ba$, hence to each row block $\ba_i, i=1, \dots, p$.\\
(ii) We have the equalities
\be
\label{6}
\bar{A}^+ = [ \bar{A}^+_1 \dots \bar{A}^+_p ], 
\left[ 
		\begin{matrix} 
		\ba \\
		W
		\end{matrix}
		\right]^+ = [ \ba^+  ~W^+ ]  
		\ee
		and
		\be
\label{6p}
		W W^T = B B^T + S^2 = S, ~\textrm{where}~  S = Y (I - P ) Y^T: q \times q.
\ee
(iii) Let us suppose that the matrix $S$ from (\ref{6p}) is invertible and  $f$ is given by
\be
\label{5p}
f=-Y \ba^+ b.
\ee
Then, if $x$ is a solution of the system (\ref{al-12}), the vector $\left[ 
		\begin{matrix}
		x \\
		0 \\ 
		\end{matrix}
		\right]$ is a solution of (\ref{al-22}). Conversely, if 
$\left[ 
		\begin{matrix}
		x \\
		y \\ 
		\end{matrix}
		\right]$ is the minimal norm solution of the system (\ref{al-22}), then $y=0$ and $x$ is a solution of (\ref{al-12}).\\
		(iv) The vector $\tilde{x}$ is a solution of the system (\ref{al-11}) if and only if the vector $Q^T \tilde{x}$ is a solution of the system (\ref{al-12}), where $Q$ is the permutation matrix from (\ref{1}).
\end{proposition}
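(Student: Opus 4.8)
The plan is to prove the four parts of Proposition~\ref{lem1} in the order stated, since each relies on the orthogonality structure established by the augmentation and on the pseudoinverse identities of~(\ref{4}).

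\textbf{Part (i).} First I would verify the orthogonality $W \bar{A}^T = 0$ directly from the definition $W = Y(I-P)$ with $P = P_{\cR(\bar{A}^T)}$. Since $P$ is the orthogonal projector onto $\cR(\bar{A}^T)$, we have $\bar{A} P = \bar{A}$, hence $\bar{A}(I-P) = 0$, and therefore $W \bar{A}^T = Y(I-P)\bar{A}^T = Y\big(\bar{A}(I-P)\big)^T = 0$. Because $\bar{A}^T = [\bar{A}_1^T \dots \bar{A}_p^T]$, this immediately gives $W\bar{A}_i^T = 0$ for every $i$. This step is routine.

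\textbf{Part (ii).} The first identity $\bar{A}^+ = [\bar{A}_1^+ \dots \bar{A}_p^+]$ is the block-row analogue of~(\ref{all-22p2}) and follows from the mutual orthogonality~(\ref{202}) of the blocks $\bar{A}_i$: when the row blocks have mutually orthogonal ranges of their transposes, the pseudoinverse of the stacked matrix is the row-concatenation of the individual pseudoinverses (this is the standard fact cited via~\cite{horn}). The second identity is the same statement applied to the two mutually orthogonal blocks $\bar{A}$ and $W$, which are orthogonal by part (i). For~(\ref{6p}), I would write $WW^T = Y(I-P)(I-P)^T Y^T = Y(I-P)Y^T = S$ using that $I-P$ is a symmetric idempotent; then, partitioning $W = [B~S]$ (with the abuse that the $(2,2)$-block of $W$ is also called $S$), the block product gives $WW^T = BB^T + S S^T$, and since $S$ is symmetric this equals $BB^T + S^2 = S$. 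One should note in passing why the lower-right block of $W$ is symmetric: it equals $I_q - YPY^T$, a difference of symmetric matrices.

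\textbf{Part (iii).} This is the substantive part and the main obstacle. For the forward direction, given $x$ with $Ax = b$, I would plug $\big[\begin{smallmatrix}x\\0\end{smallmatrix}\big]$ into~(\ref{al-22}): the first block row reads $Ax + \Gamma\cdot 0 = b$, which holds; the second block row reads $Bx + S\cdot 0 = f$, i.e.\ $Bx = f$. Here I need to use the specific choice $f = -Y\bar{A}^+ b$ and the definition $W = [B~S] = Y(I-P)$, so $B = Y(I-P)$ restricted to the first $n$ columns, i.e.\ $B = -YP\,[\,I_n~;~0\,]$ on the relevant block; combined with $P = \bar{A}^+\bar{A}$ and the fact that $x$ is \emph{a} solution so that $\bar{A}\big[\begin{smallmatrix}x\\0\end{smallmatrix}\big] = Ax = b$, one computes $Bx = Y(I-P)\big[\begin{smallmatrix}x\\0\end{smallmatrix}\big] = Y\big[\begin{smallmatrix}x\\0\end{smallmatrix}\big] - Y\bar{A}^+\bar{A}\big[\begin{smallmatrix}x\\0\end{smallmatrix}\big] = 0 - Y\bar{A}^+ b = f$, as required. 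For the converse, let $\big[\begin{smallmatrix}x\\y\end{smallmatrix}\big]$ be the minimal-norm solution of~(\ref{al-22}). Using~(\ref{6}) and~(\ref{121})-style reasoning, $\big[\begin{smallmatrix}x\\y\end{smallmatrix}\big] = \bar{A}^+ b + W^+ f$; I would then show $y = 0$. The component $y$ picks up $Y\bar{A}^+ b$ from the first term and $Y W^+ f$ from the second; invertibility of $S = WW^T$ gives the clean formula $W^+ = W^T S^{-1}$, so $YW^+ f = YW^T S^{-1} f$, and with $f = -Y\bar{A}^+ b$ and $YW^T = Y(I-P)Y^T = S$ one gets $YW^+ f = S S^{-1} f = f = -Y\bar{A}^+ b$, which exactly cancels the contribution $Y\bar{A}^+ b$ from the first term, yielding $y = 0$. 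That $x$ then solves~(\ref{al-12}) follows by reading off the first block row of~(\ref{al-22}) with $y=0$. The delicate points are the bookkeeping of the column partition inside $W = [B~S]$ and justifying $W^+ = W^T S^{-1}$ from the invertibility of $S$ (which holds because $W$ then has full row rank $q$).

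\textbf{Part (iv).} This is immediate from~(\ref{1}): since $A = P\tilde{A}Q$, $b = P\tilde{b}$ and $x = Q^T\tilde{x}$ with $P,Q$ permutation (hence orthogonal) matrices, $Ax = b$ is equivalent to $P\tilde{A}Q Q^T\tilde{x} = P\tilde{b}$, i.e.\ $P\tilde{A}\tilde{x} = P\tilde{b}$, i.e.\ $\tilde{A}\tilde{x} = \tilde{b}$ after multiplying by $P^{-1} = P^T$. So $\tilde{x}$ solves~(\ref{al-11}) iff $Q^T\tilde{x}$ solves~(\ref{al-12}). I expect part (iii) to absorb essentially all the real work; parts (i), (ii), (iv) are short consequences of the construction and of standard pseudoinverse facts.
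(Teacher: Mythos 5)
Your proposal is correct and, for part (iii) — the only part the paper proves in detail (parts (i), (ii), (iv) are cited to the reference \cite{duffsisc}) — it follows essentially the same route as the paper: direct substitution of $\left[\begin{smallmatrix}x\\0\end{smallmatrix}\right]$ using $Y\left[\begin{smallmatrix}x\\0\end{smallmatrix}\right]=0$ and $\bar{A}\left[\begin{smallmatrix}x\\0\end{smallmatrix}\right]=b$ for the forward direction, and for the converse the representation $\left[\begin{smallmatrix}x\\y\end{smallmatrix}\right]=\bar{A}^+b+W^+f$ together with $W^+=W^TS^{-1}$ (from invertibility of $S=WW^T$) to show the $y$-components cancel. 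Your added arguments for (i), (ii), (iv) are also sound and consistent with the standard facts the paper invokes.
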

\begin{proof}
	The proofs for the conclusions $(i)$, $(ii)$ and $(iv)$ are given in \cite{duffsisc}. 
We will present here only the proof of $(iii)$ which is {different and much more detailed  that the one in \cite{duffsisc}}.
%\todoUR{Not clear about the previous sentence}
%
Thus, if $x$ is a solution of  (\ref{al-12}) we have (see  (\ref{6p}), (\ref{5p}))
\begin{eqnarray*}
\left[ 
		\begin{matrix}
		A & \Gamma \\
		B & S \\
		\end{matrix}
		\right] \left[ 
		\begin{matrix}
		x \\
		0 \\ 
		\end{matrix}
		\right] & = & \left[ 
		\begin{matrix}
		Ax \\
		W \left[ 
		\begin{matrix}
		x \\
		0 \\ 
		\end{matrix}
		\right] \\ 
		\end{matrix}
		\right] = \left[ 
		\begin{matrix}
		b \\
		Y(I-P) \left[ 
		\begin{matrix}
		x \\
		0 \\ 
		\end{matrix}
		\right] \\ 
		\end{matrix}
		\right]  \\[0.5ex]
		% $$ $$
		& = & \left[ 
		\begin{matrix}
		b \\
		-Y \bar{A}^+ \bar{A} \left[ 
		\begin{matrix}
		x \\
		0 \\ 
		\end{matrix}
		\right] \\ 
		\end{matrix}
		\right] = \left[ 
		\begin{matrix}
		b \\
		-Y \bar{A}^+ b \\ 
		\end{matrix}
		\right] = \left[ 
		\begin{matrix}
		b\\
		f \\ 
		\end{matrix}
		\right],
\end{eqnarray*}
with $f$ from (\ref{5p}), which completes the first part of the proof.

Let now $\left[ 
		\begin{matrix}
		x \\
		y
		\end{matrix}
		\right]$ 
be the minimal norm solution of (\ref{al-22}) with $f$ from (\ref{5p}).
Hence (see  \cite{popabook}, (\ref{intro-3}) and (\ref{6})) 
\be
	\label{8}
		\left[ 
		\begin{matrix}
		x \\
		y
		\end{matrix}
		\right] = \left[ 
		\begin{matrix}
		\ba \\
		W
		\end{matrix}
		\right]^+ \left[ 
		\begin{matrix}
		b \\
		f
		\end{matrix}
		\right]  = [ \ba^+  ~W^+ ] \left[ 
		\begin{matrix}
		b \\
		f
		\end{matrix}
		\right] =  \ba^+ b + W^+ f.
\ee
According to our hypothesis on the invertibility of the matrix $S$, and the second equality in (\ref{6p}) we conclude that $W^T$ has full column rank, 
therefore (see again \cite{popabook}) 
\be
	\label{erlg}
	W^+ = W^T (W W^T)^{-1} = (I-P) Y^T S^{-1} = W^T S^{-1}.
\ee
Then
$$
	W^+ f = W^T S^{-1} f = \left[ 
		\begin{matrix}
		B^T \\
		S^T \\ 
		\end{matrix}
		\right] S^{-1} f = \left[ 
		\begin{matrix}
		B^T S^{-1} f\\
		f \\ 
		\end{matrix}
		\right],
$$
and from (\ref{8}) and (\ref{5}) we obtain ($\bar{n} = n + q$;  see (\ref{3}))
$$
	\ba^+ b + W^+ f = \left[
	\begin{matrix}
	I_n & 0\\
	0 & I_q \\
	\end{matrix}
	\right ]
	\ba^+ b  + W^+ f= 
	\left[ 
		\begin{matrix}
		[I_n ~0] \ba^+ b\\
		[0 ~I_q] \ba^+ b\\ 
		\end{matrix}
		\right] + \left[ 
		\begin{matrix}
		B^T S^{-1} f\\
		f \\ 
		\end{matrix}
		\right]   = 
$$
\be \label{9}
	\left[ 
		\begin{matrix}
		[I_n ~0] \ba^+ b\\
		Y \ba^+ b \\ 
		\end{matrix}
		\right] + \left[ 
		\begin{matrix}
		B^T S^{-1} f\\
		-Y \ba^+ b \\ 
		\end{matrix}
		\right] = \left[ 
		\begin{matrix}
		[I_n ~0] \ba^+ b + B^T S^{-1} f\\
		0 \\ 
		\end{matrix}
		\right] .
\ee
(\ref{8}) and (\ref{9}) yield $y=0$,  hence 
the minimal norm solution of the (consistent) system (\ref{al-22})  has the form 
$\left[ 
	\begin{matrix}
		x \\
		0 \\ 
	\end{matrix}
\right]$, 
with $x$ from (\ref{9}) (first component of the last vector).
In particular we have  
$$	\left[ 
		\begin{matrix}
		A & \Gamma \\
		B & S \\
		\end{matrix}
		\right] \left[ 
		\begin{matrix}
		x \\
		y \\ 
		\end{matrix}
		\right] = \left[ 
		\begin{matrix}
		A & \Gamma \\
		B & S \\
		\end{matrix}
		\right] \left[ 
		\begin{matrix}
		x \\
		 0 \\ 
		\end{matrix}
		\right] = 
		\left[ 
		\begin{matrix}
		A x \\
		B x \\ 
		\end{matrix}
		\right] = \left[ 
		\begin{matrix}
		b \\
		f \\ 
		\end{matrix}
		\right],
$$
i.e.~$b = Ax$ which completes the proof.
\end{proof}
\begin{remark}
	\label{rerlg}
The equalities in (\ref{erlg}) tell us that $W^+ f$ in (\ref{121}) will be computed as 
\be
	\label{erlg1}
	W^+ f = W^T S^{-1} f.
\ee
Therefore, the minimal norm solution, 
$\left[ 
	\begin{matrix}
		x^* \\
		y^* \\ 
	\end{matrix}
\right]$
of the system (\ref{al-22}) will (finally) be computed (in parallel) as (see (\ref{121}) and (\ref{5p})) 
\be
\label{erlg2}
	\left[ 
		\begin{matrix}
		x^* \\
		y^* \\ 
		\end{matrix}
	\right]  =  
	\sum_{i=1}^p \bar{A}_i^+ b^i - (I - P) Y^T S^{-1} Y \sum_{i=1}^p \bar{A}_i^+ b^i.
\ee
\end{remark}
\noindent
%\todoURI{Maybe the manuscript would profit if an earlier explicit description were given
%how $S$ is computed (or can be computed). When reading up to here, this is only indirectly described via $P$ in (\ref{6p}).
%Maybe we should even present the solution algorithm in pseudo-code. What do you think?}
%
%\textcolor{red}{{\bf Constantin: I don't know how $S$  can be computed in the general case; for a specific construction of the augmented matrix $\bar{A}$  (see (35) below) this is given in Prop. 3, (39) below}}\\
The assumption on the invertibility of the matrix $S$ from (\ref{6p}) is crucial for the results in Proposition \ref{lem1} (iii), which % gives 
states the connection between % the 
problems (\ref{al-22}) and (\ref{al-12}).
The next result % gives 
states a new sufficient 
%\todoUR{is the condition only sufficient or also necessary?}
condition for the invertibility of $S$. \\
%\textcolor{red}{{\bf Constantin: I don't know if the condition is also necessary; but, I don't think that, at least in this moment, we must care about this...}}.
%, which does not appear in \cite{duffsisc}. 
\begin{lemma}
	\label{lem1p}
If $m \leq n$ and the matrix $\tilde{A}$ of the initial system (\ref{al-11})
has full row rank, then $S$ is invertible.
\end{lemma}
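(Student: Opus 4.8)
The plan is to show that $S = Y(I-P)Y^T$ is positive definite under the full row rank hypothesis, which is equivalent to invertibility since $S$ is symmetric positive semidefinite. First I would note that $P = P_{\cR(\bar{A}^T)}$ is an orthogonal projector, so $I - P = P_{\cN(\bar{A})}$ is the orthogonal projector onto the null space of $\bar{A} = [A~~\Gamma]$. Hence for any $v \in \R^q$ we have $v^T S v = \p (I-P) Y^T v \p^2 = \p P_{\cN(\bar{A})} Y^T v \p^2 \geq 0$, and $S$ fails to be invertible precisely when there is a nonzero $v \in \R^q$ with $Y^T v \perp \cN(\bar{A})$, i.e. $Y^T v \in \cR(\bar{A}^T)$. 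Since $Y^T v = \left[\begin{matrix} 0 \\ v \end{matrix}\right]$ (first block of size $n$, second of size $q$), the claim reduces to: the only vector in $\cR(\bar{A}^T)$ whose first $n$ components vanish is the zero vector.

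The key step is therefore to characterize vectors in $\cR(\bar{A}^T)$ of the form $\left[\begin{matrix} 0 \\ v \end{matrix}\right]$. Such a vector equals $\bar{A}^T w = \left[\begin{matrix} A^T w \\ \Gamma^T w \end{matrix}\right]$ for some $w \in \R^m$, so the condition $A^T w = 0$ forces $w \in \cN(A^T)$. But $A = P\tilde{A}Q$ is obtained from $\tilde{A}$ by permutations, so $\rank(A) = \rank(\tilde{A}) = m$, and since $A$ is $m \times n$ with $m \leq n$, $A$ has full row rank; equivalently $A^T$ has full column rank, so $\cN(A^T) = \{0\}$. Thus $w = 0$, hence $v = \Gamma^T w = 0$. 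This shows no nonzero $v$ can make $v^T S v = 0$, so $S$ is positive definite and in particular invertible.

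The main obstacle — more a bookkeeping point than a genuine difficulty — is making sure the reduction "$S$ invertible $\iff$ no nonzero $v$ with $Y^T v \in \cR(\bar{A}^T)$" is handled cleanly, since $S$ is only a priori positive semidefinite and one must invoke the standard fact that a symmetric PSD matrix is invertible iff its quadratic form has trivial kernel. Once that is in place, the argument is just the observation that augmenting $A$ with extra columns cannot destroy full row rank, so $\cN(A^T) = \{0\}$ does all the work. I would present the reduction explicitly via $v^T S v = \p(I-P)Y^T v\p^2$ and the identity $\cR(\bar{A}^T)^\perp = \cN(\bar{A})$, then finish with the rank computation above.
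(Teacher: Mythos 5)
Your proof is correct, and its overall skeleton matches the paper's: both arguments reduce invertibility of $S=WW^T$ to injectivity of $W^T$ (your positive-definiteness formulation $v^TSv=\|(I-P)Y^Tv\|^2$ is the same reduction), and both are finished off by the full row rank of $A$, which is inherited from $\tilde{A}$ under the permutations. The execution of the middle step differs, though, in a way worth noting. The paper first invokes the full row rank of $\bar{A}=[A~\Gamma]$ to write $P=\bar{A}^T(\bar{A}\bar{A}^T)^{-1}\bar{A}$ explicitly and then expands $(I-P)Y^Tz=0$ into the two block equations
\[
A^T(AA^T+\Gamma\Gamma^T)^{-1}\Gamma z=0,\qquad z-\Gamma^T(AA^T+\Gamma\Gamma^T)^{-1}\Gamma z=0,
\]
deducing $\Gamma z=0$ from the first (full column rank of $A^T$) and then $z=0$ from the second. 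You instead stay coordinate-free: $(I-P)Y^Tv=0$ means $Y^Tv\in\cN(I-P)=\cR(\bar{A}^T)$, so $Y^Tv=\bar{A}^Tw$ for some $w$, the vanishing first block gives $A^Tw=0$, hence $w=0$ by $\cN(A^T)=\{0\}$, hence $v=\Gamma^Tw=0$. Your route is a bit shorter and does not need the explicit pseudoinverse representation of $P$ (nor, explicitly, the invertibility of $AA^T+\Gamma\Gamma^T$), only that $P$ is the orthogonal projector onto $\cR(\bar{A}^T)$; the paper's computation, on the other hand, produces the explicit equations above, which expose the role of $\Gamma$ in the structure of $W$. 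Just make sure to state (as you do) that a symmetric positive semidefinite matrix is invertible iff its quadratic form vanishes only at zero, and note the minor notational clash between the projector $P$ and the permutation matrix $P$ from the reordering.
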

\begin{proof}
According to the equality $S=W W^T$ (see (\ref{6p})) we get invertibility for 
$S = W W^T: q \times q$ (see (\ref{6p}))
if and only if the matrix $W^T: (n+q) \times  q$ has full column rank.
In this respect, let us suppose that $W^T z = 0$, for some $z \in \R^q$. 
As $\tilde{A}$ has full row rank, also the matrices $A$ from (\ref{1}) and $\bar{A}$ from (\ref{3}) will have full row rank, then (see e.g \cite{popabook})   
\be
\label{100}
\bar{A}^+ = \bar{A}^T (\bar{A} \bar{A}^T)^{-1}, ~~P= \bar{A}^+ \bar{A} = \bar{A}^T (\bar{A} \bar{A}^T)^{-1} \bar{A}.
\ee
Therefore, from (\ref{3}), (\ref{5}), (\ref{100}) and (\ref{5}) we obtain
$$
W^T z =0 \Leftrightarrow (I-P) Y^T z=0 \Leftrightarrow (I-P) \left[ 
		\begin{matrix}
		0 \\
		z \\ 
		\end{matrix}
		\right] = 0 \Leftrightarrow 
$$
$$
\left[ 
		\begin{matrix}
		0 \\
		z \\ 
		\end{matrix}
		\right] - \left[ 
		\begin{matrix}
		A^T \\
		\Gamma^T \\ 
		\end{matrix}
		\right] (A A^T + \Gamma \Gamma^T)^{-1} [ A ~~\Gamma ] \left[ 
		\begin{matrix}
		0 \\
		z \\ 
		\end{matrix}
		\right] = \left[ 
		\begin{matrix}
		0 \\
		0 \\ 
		\end{matrix}
		\right] \Leftrightarrow
$$
\be \label{101p}
	\left\{\begin{array}{c}
		A^T (A A^T + \Gamma \Gamma^T)^{-1} \Gamma z = 0 \\
		z - \Gamma^T (A A^T + \Gamma \Gamma^T)^{-1} \Gamma z = 0 \\
	\end{array} \right.
\ee
But, from our hypothesis the matrix $A^T$ has full column rank, thus from the first equation in (\ref{101p}) we get $\Gamma z = 0$, which gives us $z=0$ from the second equation and completes the proof.
\end{proof}  

{\bf Some comments on the structure of the matrix $S$.}

\vspace*{0.3cm}
According to Lemma \ref{lem1p}, if $\tilde{A}$ is underdetermined with full row rank, the matrix $S$ is invertible. In this case, we will also provide details on the structure of $S$ involving the orthogonal projections $P_i = P_{\cR(A^T_i}$ for a specific construction of the extended matrix $\bar{A}$ in (\ref{3}). Because both matrices ${A}, \bar{A}$ and the blocks ${A}_i, \bar{A}_i$ are underdetermined with full row rank the following are true (see e.g.~\cite{popabook}):
\be
\label{444}
\bar{P} = \bar{A}^+ \bar{A} = \sum_{i=1}^p \bar{P}_i, ~~~\bar{P}_i = 
 P_{\cR(\ba_i^T)} =  \ba^+_i \ba_i,
\ee
\be
\label{erlg13}
{P} = {A}^+ {A}, ~~~{P}_i = 
 P_{\cR(A_i^T)} =  A^+_i A_i,
\ee
\be
\label{erlg13p}
A^+ = A^T (A A^T)^{-1}, \bar{A}^+ = \bar{A}^T (\bar{A} \bar{A}^T)^{-1}, 
A_i^+ = A_i^T (A_i A_i^T)^{-1}, \bar{A}_i^+ = \bar{A}_i^T (\bar{A}_i \bar{A}_i^T)^{-1}.
\ee		
 In \cite{adumi} it is proposed the following construction of the matrix $\Gamma$ in (\ref{al-22})
\be
\label{erlg5}
\Gamma = D A, ~D=diag(I_{m_1}, -I_{m_2}, ..., (-1)^{p+1}I_{m_p}), ~\bar{A}_i = \begin{bmatrix} A_i & (-1)^{i+1}A_i \end{bmatrix}
\ee
\begin{proposition}
\label{prop1}
We can decompose the projector $\bar{P}$ depending on the projectors $P_i$ such as:
\begin{equation} \label{erlg6}
            \bar{P} = \begin{bmatrix}
                        \bar{P}_{11} & \bar{P}_{12}\\
                        \bar{P}_{21} & \bar{P}_{22}
                    \end{bmatrix}
\end{equation}
where $\bar{P}_{11} = \bar{P}_{22} = \frac{1}{2}\sum_i P_i$ and 
        $\bar{P}_{12} = \bar{P}_{21} = \frac{1}{2}\sum_i (-1)^{i+1}P_i$.
\end{proposition}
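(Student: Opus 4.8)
The plan is to compute each augmented block projector $\bar{P}_i=\ba_i^+\ba_i$ explicitly for the particular construction (\ref{erlg5}) and then sum over $i$ using the decomposition (\ref{444}). By (\ref{erlg5}) we have $\ba_i=[\,A_i\ \ (-1)^{i+1}A_i\,]$, so the first step is the elementary identity
\[
\ba_i\ba_i^T = A_iA_i^T + \big((-1)^{i+1}\big)^2 A_iA_i^T = 2\,A_iA_i^T .
\]
Here $A_iA_i^T$ is invertible, because $\tilde A$ (hence $A$) has full row rank and therefore each row block $A_i$ also has full row rank (a subset of linearly independent rows stays linearly independent); this is exactly the situation underlying the pseudoinverse formulas (\ref{erlg13p}), and it gives $(\ba_i\ba_i^T)^{-1}=\tfrac12(A_iA_i^T)^{-1}$.

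Next I would use $\ba_i^+=\ba_i^T(\ba_i\ba_i^T)^{-1}$ from (\ref{erlg13p}) and the block form $\ba_i^T=\begin{bmatrix}A_i^T\\ (-1)^{i+1}A_i^T\end{bmatrix}$ to multiply out the $2\times 2$ block product
\[
\bar{P}_i=\ba_i^+\ba_i=\ba_i^T(\ba_i\ba_i^T)^{-1}\ba_i
=\frac12\begin{bmatrix}
A_i^T(A_iA_i^T)^{-1}A_i & (-1)^{i+1}A_i^T(A_iA_i^T)^{-1}A_i\\
(-1)^{i+1}A_i^T(A_iA_i^T)^{-1}A_i & A_i^T(A_iA_i^T)^{-1}A_i
\end{bmatrix}.
\]
Since $P_i=A_i^+A_i=A_i^T(A_iA_i^T)^{-1}A_i$ by (\ref{erlg13}) and (\ref{erlg13p}), every block above equals $\pm\tfrac12 P_i$, so
\[
\bar{P}_i=\frac12\begin{bmatrix}P_i & (-1)^{i+1}P_i\\ (-1)^{i+1}P_i & P_i\end{bmatrix}.
\]

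Finally I would invoke $\bar{P}=\sum_{i=1}^p\bar{P}_i$ from (\ref{444}) and add these matrices block by block, which immediately yields $\bar{P}_{11}=\bar{P}_{22}=\tfrac12\sum_i P_i$ and $\bar{P}_{12}=\bar{P}_{21}=\tfrac12\sum_i(-1)^{i+1}P_i$, the desired conclusion. I do not expect a real obstacle: the whole argument is a direct block-matrix calculation, and the only step deserving an explicit sentence is the invertibility of $A_iA_i^T$, i.e.\ the fact that the row blocks of a full-row-rank matrix are again of full row rank. If one preferred to avoid even that remark, an alternative is to verify directly that the proposed block matrix is symmetric, idempotent, and has range $\cR(\ba^T)$, hence equals $\bar{P}=P_{\cR(\ba^T)}$; but the blockwise computation of $\ba_i^+\ba_i$ is the shorter route.
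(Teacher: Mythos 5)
Your proposal is correct and follows essentially the same route as the paper: compute $\bar{A}_i\bar{A}_i^T=2A_iA_i^T$, write $\bar{P}_i=\bar{A}_i^T(\bar{A}_i\bar{A}_i^T)^{-1}\bar{A}_i=\frac12\begin{bmatrix}P_i & (-1)^{i+1}P_i\\ (-1)^{i+1}P_i & P_i\end{bmatrix}$, and sum over $i$ using (\ref{444}). Your explicit remark that each $A_iA_i^T$ is invertible because the row blocks of the full-row-rank matrix $A$ again have full row rank is a small point the paper leaves implicit, but otherwise the arguments coincide.
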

\begin{proof}
We will obtain  the expression of $\bar{P}_i$  depending on $P_i$ (see also (\ref{4}) and (\ref{erlg13p})). 
\begin{align} \label{erlg7}
            \begin{split}
                \bar{P}_i &= \mathcal{P}_{\mathcal{R}(\bar{A_i}^T)}\\
                &= \bar{A_i}^T(\bar{A_i}\bar{A_i}^T)^{-1}\bar{A_i}\\
                &= \begin{bmatrix}
                        A_i^T\\
                        (-1)^{i+1}A_i^T
                    \end{bmatrix}
                    \left(\begin{bmatrix}
                        A_i & (-1)^{i+1}A_i
                    \end{bmatrix}
                    \begin{bmatrix}
                        A_i^T\\
                        (-1)^{i+1}A_i^T
                    \end{bmatrix}\right)  ^{-1}
                    \begin{bmatrix}
                        A_i & (-1)^{i+1}A_i
                    \end{bmatrix}
                \\
                &= \begin{bmatrix}
                        A_i^T\\
                        (-1)^{i+1}A_i^T
                    \end{bmatrix}
                    (2A_iA_i^T)^{-1}
                    \begin{bmatrix}
                        A_i & (-1)^{i+1}A_i
                    \end{bmatrix}
                \\
                &= \frac{1}{2}\begin{bmatrix}
                        A_i^T(A_iA_i^T)^{-1}\\
                        (-1)^{i+1}A_i^T(A_iA_i^T)^{-1}
                    \end{bmatrix}
                    \begin{bmatrix}
                        A_i & (-1)^{i+1}A_i
                    \end{bmatrix}
                \\
                &= \frac{1}{2}\begin{bmatrix}
                        A_i^T(A_iA_i^T)^{-1}A_i & (-1)^{i+1}A_i^T(A_iA_i^T)^{-1}A_i\\
                        (-1)^{i+1}A_i^T(A_iA_i^T)^{-1}A_i & A_i^T(A_iA_i^T)^{-1}A_i
                    \end{bmatrix}
                \\
                &= \frac{1}{2}\begin{bmatrix}
                        P_i & (-1)^{i+1}P_i\\
                        (-1)^{i+1}P_i & P_i
                    \end{bmatrix}
            \end{split}
        \end{align} .
        The result is an expression of the projector $P$ split in 4 parts
        \begin{align} \label{erlg8}
            \begin{split}
                \bar{P} &= \sum_{i=1}^p \bar{P_i}\\
                &= \frac{1}{2}\begin{pmatrix}
                                \sum_{i=1}^p P_i & \sum_{i=1}^p (-1)^{i+1}P_i\\
                                \sum_{i=1}^p (-1)^{i+1}P_i & \sum_{i=1}^p P_i
                            \end{pmatrix}
            \end{split}
        \end{align}
				which completes the proof.
\end{proof}
The following formula can be useful when we have some additional information about the blocks $A_i$ in the reordered matrix $A$ from (\ref{1}).
\begin{proposition}
		\label{prop2}
We can decompose the submatrix $S$
depending on the elements from (\ref{erlg6}) - (\ref{erlg7}) as 
        \begin{equation} \label{erlg9}
            S = I_n - \bar{P}_{22} = I_n - \frac{1}{2}\sum_i P_i.
        \end{equation}
\end{proposition}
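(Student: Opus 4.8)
The key observation is that, for the specific construction \eqref{erlg5}, we have $\Gamma = DA$ with $\Gamma : m \times n$, so that $q = n$ and $\bar n = n + q = 2n$. Consequently the matrix $Y = [\,0 ~~ I_q\,] = [\,0 ~~ I_n\,]$ is of size $n \times 2n$, and the projector $P = P_{\cR(\bar A^T)} = \bar P$ from \eqref{4}, \eqref{444} is the $2n \times 2n$ matrix decomposed in $n \times n$ blocks as in \eqref{erlg6}. The whole argument is then a one-line block computation: $Y$ acts as the ``selection operator'' that extracts the lower-right block of a $2\times 2$ block matrix.

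First I would start from the definition of $S$ in \eqref{6p}, namely $S = Y(I-P)Y^T = Y Y^T - Y P Y^T$. Since the rows of $Y=[\,0~~I_n\,]$ are orthonormal we have $Y Y^T = I_n$. Next, writing $\bar P$ in the block form \eqref{erlg6} and using $Y = [\,0~~I_n\,]$, a direct multiplication gives
\[
Y P Y^T = [\,0~~I_n\,]\begin{bmatrix} \bar P_{11} & \bar P_{12}\\ \bar P_{21} & \bar P_{22}\end{bmatrix}\begin{bmatrix} 0\\ I_n\end{bmatrix} = \bar P_{22}.
\]
Combining the two identities yields $S = I_n - \bar P_{22}$.

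Finally I would invoke Proposition \ref{prop1}, which states $\bar P_{22} = \tfrac12 \sum_i P_i$ for the construction \eqref{erlg5}, to conclude $S = I_n - \bar P_{22} = I_n - \tfrac12 \sum_{i} P_i$, as claimed. There is essentially no obstacle here beyond keeping the bookkeeping straight: the only point deserving a word of care is that the block sizes are consistent (each $P_i = P_{\cR(A_i^T)}$ is $n\times n$, matching the $n\times n$ blocks of $\bar P$), which follows from the fact that $A$ and every $A_i$ act on $\R^n$, together with the already-established formula \eqref{erlg13} for the $P_i$.
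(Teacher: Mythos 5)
Your proposal is correct and follows essentially the same route as the paper: expand $S = Y(I-\bar P)Y^T$ with $Y = [\,0~~I_n\,]$, use $YY^T = I_n$ and block multiplication to extract $\bar P_{22}$, then apply Proposition \ref{prop1}. Your explicit remarks that $q=n$ under the construction (\ref{erlg5}) and that the block sizes match are a welcome clarification the paper leaves implicit, but they do not change the argument.
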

\begin{proof}
We can express S in terms of a restriction of W:
\begin{align} \label{erlg10}
            \begin{split}
               S &= Y(I_n - \bar{P})Y^T\\
               &= \begin{bmatrix}
                        0 & I_n
                    \end{bmatrix}
                    (I_n - \bar{P})
                    \begin{bmatrix}
                        0\\
                        I_n
                    \end{bmatrix}
                \\
                &= \begin{bmatrix}
                        0 & I_n
                    \end{bmatrix}
                    \begin{bmatrix}
                        0\\
                        I_n
                    \end{bmatrix}
                    -
                    \begin{bmatrix}
                        0 & I_n
                    \end{bmatrix}
                    \begin{bmatrix}
                        \bar{P}_{11} & \bar{P}_{12}\\
                        \bar{P}_{21} & \bar{P}_{22}
                    \end{bmatrix}
                    \begin{bmatrix}
                        0\\
                        I_n
                    \end{bmatrix}
                \\
                &= I_n -
                    \begin{bmatrix}
                        0 & I_n
                    \end{bmatrix}
                    \begin{bmatrix}
                        \bar{P}_{12}\\
                        \bar{P}_{22}
                    \end{bmatrix}
                \\
                &= I_n - \bar{P}_{22}.
            \end{split}
\end{align}
\end{proof}
%
%%%%%%%%%%%%%%%%%%%%%%%
%
\section{Full column rank overdetermined systems}
\label{inconsist}
In this section we will suppose that the initial matrix $\tilde{A}$ from (\ref{al-11}) is overdetermined, with full column rank, i.e.
\be
\label{alpha-21}
m ~\geq~ n, ~~~\rank(\tilde{A}) ~=~ n.
\ee
Unfortunately, in this case % the 
system (\ref{al-11}) is usually not consistent and must be reformulated  in the least squares sense: find $\tilde {x} \in \R^n$ such that 
\be
\label{ali-11}
\p \tilde{A} \tilde{x} - \tilde{b} \p = \min !
\ee 
{By analogy with section \ref{consist} we will consider the following augmentation scheme, but with respect to the least squares formulation of the corresponding steps. 
\be
\label{ali-12}
\p A x - b \p = \min ! ~~~\Rightarrow
\ee
\be
\label{ali-21}
\left \|  \left[ 
		\begin{matrix}
		A & 0 \\
		\Gamma & I_q \\
		\end{matrix}
		\right] \left[ 
		\begin{matrix}
		x \\
		y \\ 
		\end{matrix}
		\right] = \left[ 
		\begin{matrix}
		b \\
		0 \\ 
		\end{matrix}
		\right] \right \| = \min ! ~~~\Rightarrow
\ee
\be
\label{ali-22}
\left \| \left[ 
		\begin{matrix}
		A & B \\
		\Gamma & S \\
		\end{matrix}
		\right] \left[ 
		\begin{matrix}
		x \\
		y \\ 
		\end{matrix}
		\right] - \left[ 
		\begin{matrix}
		b \\
		f \\ 
		\end{matrix}
		\right] \right \| = \min !.
\ee
}
%
%\todoURI{Typo in last equation (\ref{ali-22})?}
The matrix $A$ in (\ref{ali-12}) is constructed from $\tilde{A}$, but with respect to a block column structure, i.e.
$$
	A= [ A^1 ~A^2 \dots ~A^p ]:m \times n, ~A^i: m \times n_i, \sum_{i=1}^p n_i = n,
$$
\be
\label{fcr-1}
	A = P \tilde{A} Q, ~b = P \tilde{b},  ~P, Q ~{\rm orthogonal}.
\ee
In %the 
problem (\ref{ali-21}) 
the matrix $\Gamma$ has a block column structure 
$\Gamma = [ \Gamma^1  ~\Gamma^2 \dots \Gamma^p ]$
and is  constructed such that the augmented matrix 
\be
\label{fcr-3}
\bar{A} = \left[ 
		\begin{matrix}
		A \\
		\Gamma \\ 
		\end{matrix}
		\right] = [ \bar{A}^1 ~\bar{A}^2 \dots ~\bar{A}^p ], ~\bar{A}^i = \left[ 
		\begin{matrix}
		A^i \\
		\Gamma^i \\
		\end{matrix}
		\right], i=1, \dots, p
\ee
has mutually orthogonal column blocks, i.e.
\be
\label{fcr-4}
\bar{A}_i^T  \bar{A}^j = 0, ~\forall i \neq j.
\ee
But, because the block columns
$	\bar{A}$ and $Y = \left[ 
		\begin{matrix}
		0 \\
		I_q \\
		\end{matrix}
\right ] $ 
are not orthogonal, % therefore 
we consider the augmented problem (\ref{ali-22})
\be
\label{P3}
\left \| \left[ 
		\begin{matrix}
		A & B \\
		\Gamma & S \\
		\end{matrix}
		\right] \left[ 
		\begin{matrix}
		y \\
		 z \\
		\end{matrix}
		\right] - \left[ 
		\begin{matrix}
		b \\
		f \\
		\end{matrix}
		\right] \right \| = \min!,
\ee
with $B=m \times q, S: q \times q$ and $W=\left[ 
		\begin{matrix}
		B \\
		S \\
		\end{matrix}
		\right] : \bar{m} \times q$ such that
\be
		\label{1-4}
		\bar{A}^T W = 0.
		\ee
For this it suffices to define
\be
		\label{1-5}
		W = ( I_{\bar{m}} - \bar{P}) \left[ 
		\begin{matrix}
		0 \\
		I_q \\
		\end{matrix}
		\right] = ( I_{\bar{m}} - \bar{P}) Y, ~~{\rm with}~~
		Y= \left[ 
		\begin{matrix}
		0 \\
		I_q \\
		\end{matrix}
		\right], 
		~\bar{P}=P_{\cR(\bar{A})} = \bar{A} \bar{A}^+.
\ee
By using the well known relation $A A^+ A = A$ (see e.g. \cite{horn}) we obtain
\be
		\label{new2}
	W^T \bar{A} = Y^T (I_{\bar{m}} - \bar{P}) \bar{A} = 
		Y^T (\bar{A} - \bar{A} \bar{A}^+ \bar{A}) = Y^T (\bar{A} - \bar{A} ) = 0,
\ee
i.e.~(\ref{1-4}).

According to the initial problem (\ref{ali-11}) 
and the reordered one (\ref{ali-12}) the following result can be easily proved.
\begin{lemma}
		\label{lerlg}
The matrix $A$ is overdetermined and has full column rank.
If $\tilde{x}_{LS}, ~x_{LS}$, are the (unique)
minimal norm solutions of the problems (\ref{ali-11}) and (\ref{ali-12}), respectively, then
\be
		\label{erlg3}
		\tilde{x}_{LS} = Q x_{LS},
\ee
with $Q$ the orthogonal matrix from (\ref{fcr-1}).
\end{lemma}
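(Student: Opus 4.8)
The plan is to reduce the statement to two elementary facts about the orthogonal matrices $P$ and $Q$ from (\ref{fcr-1}): they preserve Euclidean norms, and they behave well under the Moore--Penrose pseudoinverse. First I would note that, since $P^TP = I_m$ and $Q^TQ = I_n$, the relation $A = P\tilde{A}Q$ is the same as $\tilde{A} = P^T A Q^T$, and that for every $x \in \R^n$ the substitution $\tilde{x} = Qx$ gives
\[
\tilde{A}\tilde{x} - \tilde{b} = P^T A Q^T Q x - P^T b = P^T (Ax - b),
\]
so that $\p \tilde{A}\tilde{x} - \tilde{b} \p = \p Ax - b \p$ because $P^T$ is orthogonal. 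Hence $x \mapsto Qx$ is a norm-preserving linear bijection of $\R^n$ carrying the residual of (\ref{ali-12}) at $x$ onto the residual of (\ref{ali-11}) at $Qx$; in particular it maps the least squares solution set of (\ref{ali-12}) bijectively onto that of (\ref{ali-11}).

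Next I would use $\p Qx \p = \p x \p$: minimising $\p x \p$ over the solution set of (\ref{ali-12}) is the same problem as minimising $\p Qx \p$ over the image of that set, i.e.\ over the least squares solution set of (\ref{ali-11}). Since the full column rank hypothesis makes both $A^TA$ and $\tilde{A}^T\tilde{A}$ invertible, each of these minimisers is unique, and the bijection above forces $Q x_{LS} = \tilde{x}_{LS}$, which is (\ref{erlg3}). An equivalent and shorter route instead uses the pseudoinverse formulas $x_{LS} = A^+ b$ and $\tilde{x}_{LS} = \tilde{A}^+ \tilde{b}$ from (\ref{intro-3}) together with the identity $(P\tilde{A}Q)^+ = Q^T \tilde{A}^+ P^T$ for orthogonal $P,Q$; from $A^+ = Q^T \tilde{A}^+ P^T$ one gets $x_{LS} = A^+ b = Q^T \tilde{A}^+ P^T P \tilde{b} = Q^T \tilde{A}^+ \tilde{b} = Q^T \tilde{x}_{LS}$, and multiplying by $Q$ yields (\ref{erlg3}).

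There is no genuine obstacle: the statement is essentially a change-of-variables bookkeeping exercise, in line with the paper's remark that it ``can be easily proved''. The only point deserving a careful sentence is the pseudoinverse-under-orthogonal-transformation identity $(P\tilde{A}Q)^+ = Q^T\tilde{A}^+P^T$ --- proved by verifying the four Moore--Penrose conditions for $Q^T\tilde{A}^+P^T$ against $P\tilde{A}Q$ using $P^TP=I_m$, $Q^TQ=I_n$ and the corresponding conditions for $\tilde{A}^+$ --- or, in the first approach, the uniqueness of the minimal norm least squares solution under full column rank; both are standard and already covered by the references cited in the paper, e.g.\ \cite{popabook} and \cite{horn}.
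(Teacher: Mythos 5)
Your argument is correct, and in fact the paper gives no proof of this lemma at all --- it is simply declared to be ``easily proved'' --- so your change-of-variables computation $\tilde{A}(Qx)-\tilde{b}=P^{T}(Ax-b)$ combined with $\p Qx\p=\p x\p$ (or, equivalently, the identity $(P\tilde{A}Q)^{+}=Q^{T}\tilde{A}^{+}P^{T}$ and $x_{LS}=A^{+}b$) supplies exactly the routine verification the authors intended. The only point you leave implicit is the lemma's first assertion, that $A$ itself is overdetermined with full column rank; this is immediate since $A=P\tilde{A}Q$ with $P,Q$ invertible gives $\rank(A)=\rank(\tilde{A})=n\leq m$, and it is what justifies the uniqueness you invoke.
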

The above lemma tells us that once the (unique) minimal norm solution of (\ref{ali-12}) is computed, we can easily obtain the similar solution of the initial system (\ref{ali-11}) through the equality in (\ref{erlg3}). Hence, as in section \ref{consist} we will show in the rest of this section how $x_{LS}$ can be computed through the minimal norm solution of the problem (\ref{ali-22}) for a particular choice of the vector $f$. Because the corresponding results are more elaborate than those from section  \ref{consist} we will first present them and at the end of the section we will give a solution scheme similar with the one from (1.1) - (1.4).

The next two results present
information about the problem (\ref{ali-22}) for a general right hand side vector $f$.
\begin{lemma}
\label{Plem2}
(i) We have the equalities
\be
\label{1-6}
\left[ \begin{matrix}
		A & B \\
		\Gamma & S \\
		\end{matrix}
		\right]^T \left[ 
		\begin{matrix}
		A & B \\
		\Gamma & S \\
		\end{matrix}
		\right] = \left[ 
		\begin{matrix}
		A^T A + \Gamma^T \Gamma & 0 \\
		0 & B^T B + S^T S \\
\end{matrix} \right],  
\ee
\be
\label{1-7}
S = W^T W, ~~B^T B = S - S^2,
\ee
\be
\label{1-88}
W = \left[ 
		\begin{matrix}
		-A(A^T A + \Gamma^T \Gamma)^{-1} \Gamma^T  \\
		I - \Gamma -A(A^T A + \Gamma^T \Gamma)^{-1} \Gamma^T  \\
		\end{matrix}
		\right]
\ee
(ii) The matrix $S$ is invertible.
\end{lemma}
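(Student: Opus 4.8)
The plan is to prove the two claims of Lemma \ref{Plem2} in sequence, exploiting the column-orthogonality $\bar A^T W = 0$ established in (\ref{new2}) and the full column rank of $A$ coming from (\ref{alpha-21}).

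\textbf{Part (i).} For (\ref{1-6}) I would simply expand the product of the block matrices, observing that the off-diagonal blocks are $A^T B + \Gamma^T S$ and $B^T A + S^T \Gamma$, and that these equal $\bar A^T W$ (resp.\ $W^T \bar A$) after grouping, hence vanish by (\ref{new2}); the diagonal blocks are immediate. For (\ref{1-7}), I would compute $W^T W$ directly from the defining formula $W = (I_{\bar m} - \bar P)Y$: using that $\bar P$ is an orthogonal projector ($\bar P^T = \bar P$, $\bar P^2 = \bar P$), one gets $W^T W = Y^T (I-\bar P)^2 Y = Y^T(I-\bar P)Y = S$, where the last equality is just the definition of $S$ in the block formula $W = [B^T\ S^T]^T$ restricted to its lower block. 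Then $B^T B = S - S^2$ follows from $S = W^T W = B^T B + S^T S$ together with $S = S^T$ (which itself follows from $S = W^T W$). For (\ref{1-88}) I would write out $\bar P = \bar A \bar A^+$ using $\bar A^+ = (\bar A^T \bar A)^{-1}\bar A^T$ — valid because $\bar A$ inherits full column rank from $A$ — so that $\bar P = \bar A (\bar A^T \bar A)^{-1}\bar A^T$ with $\bar A^T\bar A = A^T A + \Gamma^T\Gamma$; substituting into $W = (I-\bar P)Y$ and reading off the two block rows gives the stated expression (this is a routine block multiplication, modulo matching the paper's sign/typographic conventions in the lower block).

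\textbf{Part (ii).} To show $S$ is invertible I would use $S = W^T W$ from (\ref{1-7}), so it suffices that $W$ has full column rank, i.e.\ $Wz = 0 \Rightarrow z = 0$ for $z \in \R^q$. Since $W = (I - \bar P)Y$, $Wz = 0$ means $Yz \in \cR(\bar A)$, i.e.\ $\begin{bmatrix} 0 \\ z\end{bmatrix} = \bar A w = \begin{bmatrix} A \\ \Gamma\end{bmatrix} w$ for some $w \in \R^n$. The top block gives $Aw = 0$, and since $A$ has full column rank this forces $w = 0$, whence $z = \Gamma w = 0$. Thus $W^T W = S$ is invertible. This mirrors the argument for Lemma \ref{lem1p} in the consistent case, with the roles of rows/columns and of $A$ having full row rank vs.\ full column rank interchanged.

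The main obstacle is not conceptual but bookkeeping: getting the explicit formula (\ref{1-88}) to come out in exactly the displayed form requires carefully tracking how the block structure of $Y$ and of $\bar P$ interact, and the lower block as printed looks like it may contain a typo (a stray ``$I - \Gamma$'' where one expects something like $I_q - \Gamma(A^TA+\Gamma^T\Gamma)^{-1}\Gamma^T$); I would reconcile this by recomputing $(I_{\bar m}-\bar P)Y$ from scratch and adjusting notation to match. Everything else — the orthogonality-based simplifications in (i)(a), the idempotent computation for $S = W^TW$, and the rank argument in (ii) — is short and follows the template already used in Section \ref{consist}.
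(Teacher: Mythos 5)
Your proposal is correct, and part (i) follows the paper's proof essentially verbatim: off-diagonal blocks of the normal matrix are $\bar A^T W$ and $W^T\bar A$, which vanish by (\ref{new2}); $S = Y^T W = Y^T(I-\bar P)Y$ combined with the projector identities gives $S = W^TW$ and then $B^TB = S - S^2$; and (\ref{1-88}) comes from expanding $\bar P = \bar A(\bar A^T\bar A)^{-1}\bar A^T$, where you are right that the displayed lower block in the lemma statement is a typo for $I - \Gamma(A^TA+\Gamma^T\Gamma)^{-1}\Gamma^T$ (the proof and (\ref{P0-2}) show the intended form). The only genuine difference is in part (ii): the paper invokes the rank-of-a-product formula from \cite{punt} to write $\rank(W) = q - \dim(\cR(Y)\cap\cR(\bar A))$ and then refers to ``similar arguments as in the proof of Lemma \ref{lem1p}'' to conclude, whereas you argue directly that $Wz=0$ forces $Yz\in\cR(\bar A)$, so $\left[\begin{smallmatrix}0\\ z\end{smallmatrix}\right] = \left[\begin{smallmatrix}Aw\\ \Gamma w\end{smallmatrix}\right]$, and full column rank of $A$ gives $w=0$, hence $z=0$ and $S=W^TW$ is invertible. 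Your version is more elementary and self-contained (no external rank identity needed) and in effect makes explicit the step the paper only sketches; the paper's route, in exchange, exhibits the rank deficiency of $W$ as exactly $\dim(\cR(Y)\cap\cR(\bar A))$, which is a slightly more general piece of information.
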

\begin{proof}
From the orthogonality relations (\ref{1-4}) and (\ref{new2})  we obtain
$$
0 = A^T B + \Gamma^T S  = B^T A + S^T \Gamma,
$$ 
which gives us the equality (\ref{1-6}). From (\ref{1-5}) it results
\be
\label{1-9}
S=Y^T W = [ 0 ~I ] \left[ 
		\begin{matrix}
		B \\
		S \\
		\end{matrix}
		\right] = [ 0 ~I ] (I - \bar{P}) \left[ 
		\begin{matrix}
		0 \\
		I \\
		\end{matrix}
		\right] = Y^T (I - \bar{P}) Y.
\ee
Then, the first equality (\ref{1-7}) follows from (\ref{1-9}) and $W=(I - \bar{P}) Y$, whereas the second one from (\ref{1-7}) and $W=\left[ 
		\begin{matrix}
		B \\
		S \\
		\end{matrix}
		\right]$.\\
		As the matrix  $\bar{A} = \left[ 
		\begin{matrix}
		A \\
		\Gamma \\
		\end{matrix}
		\right]: \bar{m} \times n, ~\bar{m}=m+q \geq n$ from (\ref{fcr-3})  is overdetermined and has full column rank we have by successively using (\ref{1-5}) 
$$
		W = ( I_{\bar{m}} - \bar{P}) Y = ( I_{\bar{m}} - \bar{A} \bar{A}^+) Y = 
$$
$$
		( I_{\bar{m}} - \left[ 
		\begin{matrix}
		A \\
		\Gamma \\
		\end{matrix}
		\right] (A^T A + \Gamma^T \Gamma)^{-1} [ A^T ~\Gamma^T ] \left[ 
		\begin{matrix}
		0 \\
		I \\
		\end{matrix}
		\right] =  \left[ 
		\begin{matrix}
		-A (A^T A + \Gamma^T \Gamma)^{-1} \Gamma^T \\
		I_{\bar{m}} - \Gamma (A^T A + \Gamma^T \Gamma)^{-1} \Gamma^T \\
		\end{matrix}
		\right]
$$
from which (\ref{1-88}) holds.

(ii) If we apply the result in \cite{punt}, Theorem 5, eq. (5.3), page 121 to $W^T=E F$, with (see (\ref{1-5})) $W=(I_{\bar{m}} - \bar{P}) Y$, $E=Y^T, ~F=I_{\bar{m}} - \bar{P}$
we obtain 
$$
	\rank(W) = \rank(W^T) = \rank(Y^T (I - \bar{P})) = 
$$
\be
\label{new5}
\rank(Y^T) - 
\dim(\cR(Y) \cap \cN(I - \bar{P})) = 
q - \dim(\cR(Y) \cap \cR(\bar{A}).
\ee
Then, with similar arguments as in the proof of Lemma \ref{lem1p} and using  the first equality in (\ref{1-7}) we get that the matrix $S$ is invertible. 
\end{proof}
\begin{lemma}
\label{newlem1}
For any vector $f \in \R^q$ the matrix of the problem  (\ref{P3}) is overdetermined and has full column rank; moreover, its (unique) minimal norm solution
$\left[ 
		\begin{matrix}
		y \\
		 z\\
		\end{matrix}
		\right]_{LS}
$ is given by
\be  \label{new7}
	\left[ \begin{matrix}
		y \\
		 z\\
		\end{matrix}
		\right]_{LS} = \left[ 
		\begin{matrix}
		(A^T A + \Gamma^T \Gamma)^{-1} (A^T b + \Gamma^T f) \\
		 S^{-1} (B^T b + S f)\\
	\end{matrix} \right]
\ee
\end{lemma}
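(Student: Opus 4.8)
The plan is to reduce the least squares problem (\ref{P3}) to its normal equations and to exploit the block structure already isolated in Lemma \ref{Plem2}. Denote the coefficient matrix of (\ref{P3}) by $M = \left[\begin{matrix} A & B \\ \Gamma & S \end{matrix}\right]$ and the right-hand side by $c = \left[\begin{matrix} b \\ f \end{matrix}\right]$. First I would record the dimension count: $M$ is $(m+q)\times(n+q)$, and since $m \geq n$ by (\ref{alpha-21}) we have $m+q \geq n+q$, so $M$ is overdetermined. For the rank claim, Lemma \ref{Plem2}(i) gives $M^T M = \diag(A^T A + \Gamma^T\Gamma,\; B^TB + S^TS)$; the first diagonal block is invertible because $\bar A$ from (\ref{fcr-3}) is overdetermined with full column rank, and the second is invertible by Lemma \ref{Plem2}(ii) together with the identity $B^TB+S^TS = S$ derived below. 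Hence $M^T M$ is invertible and $M$ has full column rank.

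Once full column rank is established, the least squares solution of (\ref{P3}) is unique, so it coincides with the minimal norm one, and it equals $M^+ c = (M^TM)^{-1} M^T c$. I would then compute $M^T c = \left[\begin{matrix} A^T b + \Gamma^T f \\ B^T b + S^T f \end{matrix}\right]$ and observe that, since $M^TM$ is block diagonal, the two components of the solution decouple into $(A^TA+\Gamma^T\Gamma)\,y = A^Tb + \Gamma^T f$ and $(B^TB+S^TS)\,z = B^T b + S^T f$. The first equation immediately yields the $y$-component of (\ref{new7}).

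The only place where the structural results are genuinely needed is the simplification of the second block. Since $\bar P = P_{\cR(\bar A)}$ is an orthogonal projector, the matrix $S = Y^T(I_{\bar m}-\bar P)Y$ from (\ref{1-9}) is symmetric, so $S^TS = S^2$ and $S^T f = Sf$; combining this with $B^TB = S - S^2$ from (\ref{1-7}) gives $B^TB + S^TS = S$. Thus the second block equation becomes $S z = B^T b + S f$, and invertibility of $S$ yields $z = S^{-1}(B^T b + Sf)$, which is exactly the $z$-component of (\ref{new7}). The main (and rather mild) obstacle is keeping the symmetry of $S$ and the identity $B^TB+S^TS=S$ straight; everything else is routine linear-algebra bookkeeping on the normal equations.
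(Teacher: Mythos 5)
Your proposal is correct and follows essentially the same route as the paper: reduce (\ref{P3}) to its normal equations, use the block-diagonal structure of $M^TM$ from Lemma \ref{Plem2} together with the invertibility of $\bar A^T\bar A$ and of $S$, and read off the two decoupled components. Your packaging of the rank argument (invertibility of $M^TM$ via $B^TB+S^TS=S$, rather than citing the orthogonality of the full-column-rank blocks $\bar A$ and $W$ directly) is only a cosmetic variation, and your explicit use of the symmetry of $S$ to write $S^Tf=Sf$ is a detail the paper leaves implicit.
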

\begin{proof}
The column blocks 
$ \bar{A}=\left[ 
		\begin{matrix}
		A \\
		\Gamma\\
		\end{matrix}
		\right]$ and $W=\left[ 
		\begin{matrix}
		B \\
		S\\
\end{matrix} \right]$
have full column rank and are orthogonal (see (\ref{1-4})), which tell us that the problem matrix in (\ref{P3}), which is overdetermined (has dimensions $(m+q) \times (n+q), ~m > n$, has also full column rank. Hence, its minimal norm solution is the unique solution of the associated normal equation (see also (\ref{1-6}) and (\ref{1-7}))
\be
		\label{fcr-5}
		\left[ 
		\begin{matrix}
		A^T A + \Gamma^T \Gamma & 0 \\
		 0 & S \\
		\end{matrix}
		\right] \left[ 
		\begin{matrix}
		y \\
		 z\\
		\end{matrix}
		\right]_{LS} = 
		\left[ 
		\begin{matrix}
		A^T b + \Gamma^T f \\
		 B^T b + S f\\
		\end{matrix}
		\right]
\ee
which gives us (\ref{new7}) and completes the proof.
\end{proof}
Starting with the next result a special choice will be made on the vector $f$. This assumption will be kept in the rest of the section. 
\begin{lemma}
\label{newlem2}
If $\left[ 
		\begin{matrix}
		y \\
		 z\\
		\end{matrix}
		\right]_{LS}$ is the minimal norm solution of the problem (\ref{P3}) then 
\be
\label{P0-1}
\Gamma y + S z = 0 ~~\Leftrightarrow~~ f = 0.
\ee
\end{lemma}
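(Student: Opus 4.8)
The plan is to substitute the closed form of the minimal norm solution provided by Lemma \ref{newlem1} into the combination $\Gamma y + S z$ and to show, using the explicit expressions for the augmentation blocks $B$ and $S$ recorded in (\ref{1-88}), that $\Gamma y + S z$ is identically equal to $f$. Once this identity is established, the equivalence $\Gamma y + S z = 0 \Leftrightarrow f = 0$ is immediate, with no further case analysis.

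Concretely, I would first read off from (\ref{new7}) that $z = S^{-1}(B^T b + S f)$, which is meaningful because $S$ is invertible by Lemma \ref{Plem2}(ii); multiplying on the left by $S$ gives $S z = B^T b + S f$, and hence $\Gamma y + S z = \Gamma y + B^T b + S f$ with $y = (A^T A + \Gamma^T \Gamma)^{-1}(A^T b + \Gamma^T f)$. Next I would insert the two blocks of $W = [\,B^T\ \ S^T\,]^T$ from (\ref{1-88}), namely $B = -A(A^T A + \Gamma^T \Gamma)^{-1}\Gamma^T$, so that $B^T = -\Gamma(A^T A + \Gamma^T \Gamma)^{-1}A^T$, and $S = I_q - \Gamma(A^T A + \Gamma^T \Gamma)^{-1}\Gamma^T$. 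Then $\Gamma y = -B^T b + \Gamma(A^T A + \Gamma^T \Gamma)^{-1}\Gamma^T f$ and $S f = f - \Gamma(A^T A + \Gamma^T \Gamma)^{-1}\Gamma^T f$, so in the sum $\Gamma y + B^T b + S f$ the terms $\pm B^T b$ cancel and the two copies of $\Gamma(A^T A + \Gamma^T \Gamma)^{-1}\Gamma^T f$ cancel, leaving $\Gamma y + S z = f$. The stated equivalence then follows at once, since $f = 0$ clearly forces $\Gamma y + S z = 0$, and conversely $\Gamma y + S z = 0$ forces $f = \Gamma y + S z = 0$.

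I do not expect a genuine obstacle here: once the formulas of Lemmas \ref{newlem1} and \ref{Plem2} are available, the claim reduces to the one-line cancellation above. The only points deserving a little care are that $S$ is symmetric and invertible, so that $S S^{-1} = I_q$ and the writing $W^T = [\,B^T\ \ S\,]$ are legitimate; both are guaranteed by (\ref{1-7}) and Lemma \ref{Plem2}(ii). A slightly more conceptual alternative would be to use that the least squares residual $[\,(A y + B z - b)^T\ \ (\Gamma y + S z - f)^T\,]^T$ is orthogonal to the column block $W$ and to combine this with $W^T \bar{A} = 0$ from (\ref{new2}) and $W^T W = S$ from (\ref{1-7}); this recovers $S z = B^T b + S f$ and, together with the analogous use of $\bar{A}^T$-orthogonality to get $y$, again yields $\Gamma y + S z = f$. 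The direct substitution, however, is shorter and self-contained, so I would present that one.
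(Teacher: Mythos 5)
Your proof is correct and follows essentially the same route as the paper: substitute the closed-form solution (\ref{new7}) and the explicit blocks from (\ref{1-88}) into $\Gamma y + Sz$ and watch the terms cancel. The only (harmless) difference is that you also use $S = I_q - \Gamma(A^TA+\Gamma^T\Gamma)^{-1}\Gamma^T$ to reduce the result to the identity $\Gamma y + Sz = f$, whereas the paper stops at $\Gamma y + Sz = \bigl(\Gamma (A^TA+\Gamma^T\Gamma)^{-1}\Gamma^T + S\bigr) f$ and concludes via the symmetric positive definiteness (hence invertibility) of that matrix --- the two observations are equivalent, since that matrix is exactly $I_q$.
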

\begin{proof}
From (\ref{1-88}) it results
\be
\label{P0-2}
W = \left[ 
		\begin{matrix}
		B \\
		S \\
		\end{matrix}
		\right] =  \left[ 
		\begin{matrix}
		 - A (A^T A + \Gamma^T \Gamma)^{-1} \Gamma^T \\
		 I - \Gamma (A^T A + \Gamma^T \Gamma)^{-1} \Gamma^T\\
		\end{matrix}
		\right],
\ee
therefore
\be
\label{P0-3}
B = - A (A^T A + \Gamma^T \Gamma)^{-1} \Gamma^T = -A G \Gamma^T, ~{\rm with}~ G = (A^T A + \Gamma^T \Gamma)^{-1}.
\ee
If $\left[ 
		\begin{matrix}
		y \\
		 z\\
		\end{matrix}
		\right]_{LS}$  is the minimal norm solution of the problem (\ref{P3}), then $y$ and $z$ are given by (\ref{new7}), therefore
$$
\Gamma y + S z = \Gamma G A^T b + \Gamma G \Gamma^T f + B^T b + S f = 
$$
\be
\label{P0-4}
\Gamma G A^T b + \Gamma G \Gamma^T f - \Gamma G A^T b + Sf = 
(\Gamma G \Gamma^T + S) f.
\ee
This equality gives us the conclusion of the lemma, because the matrix $\Gamma G \Gamma^T + S$ is symmetric and positive definite.
\end{proof}
For the special choice of $f$ from (\ref{P0-1})  the problem (\ref{P3}) can be written as 
$$
\min_{(y, z) \in \R^{n+q}} \left\|  \left[ 
		\begin{matrix}
		A & B \\
		\Gamma & S \\
		\end{matrix}
		\right] \left[ 
		\begin{matrix}
		y \\
		 z \\
		\end{matrix}
		\right] - \left[ 
		\begin{matrix}
		b \\
		0 \\
		\end{matrix}
		\right] \right\|^2  \Leftrightarrow 
$$
\be
\label{P4}
		\min_{(y, z) \in \R^{n+q}} 
		(\p Ay + Bz - b\p^2 + \p \Gamma y + Sz \p^2).
\ee
Moreover, the matrix 
\be
\label{1000}
\Omega = I+(A^T A + \Gamma^T \Gamma)^{-1} \Gamma^T S^{-1} \Gamma 
\ee
is similar to the matrix 
\be
\label{1001}
H=I + (A^T A + \Gamma^T \Gamma)^{-\frac{1}{2}} \Gamma^T S^{-1} \Gamma (A^T A + \Gamma^T \Gamma)^{-\frac{1}{2}}
\ee
 because
\be
\label{1002}
H = (A^T A + \Gamma^T \Gamma)^{\frac{1}{2}} ~~\Omega~~ (A^T A + \Gamma^T \Gamma)^{-\frac{1}{2}}.
\ee
But, $H$ is SPD, therefore $\Omega$ is invertible. We introduce the notation
\be
\label{0-5}
\phi(y, z) = \left \| \left[ 
		\begin{matrix}
		A & B \\
		\Gamma & S \\
		\end{matrix}
		\right] \left[ 
		\begin{matrix}
		y \\
		 z \\
		\end{matrix}
		\right] - \left[ 
		\begin{matrix}
		b \\
		0 \\
		\end{matrix}
		\right] \right \|^2.
\ee
\begin{lemma}
\label{newlem3}
We have the inequality
\be
\label{0-6}
\phi(y, z) ~\geq~ \min \{ \p Ax - b \p^2, x \in \R^n \} = \p P_{\cN(A^T)}(b) \p^2, ~\forall (y, z).
\ee
\end{lemma}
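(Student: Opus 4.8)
The plan is to discard the nonnegative contribution of the second block row of the residual and reduce everything to an ordinary least-squares lower bound for $A$ alone. First I would expand $\phi(y,z)$ according to the block structure in (\ref{0-5}): since the Euclidean norm of a stacked vector is the sum of the squared norms of its blocks,
\be
\label{plan-a}
\phi(y,z) = \p Ay + Bz - b \p^2 + \p \Gamma y + Sz \p^2 \;\geq\; \p Ay + Bz - b \p^2
\ee
for every $(y,z) \in \R^{n+q}$. This step uses nothing about orthogonality, only the shape of the stacked vector.

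The crucial observation is that the columns of $B$ lie in $\cR(A)$. Indeed, by (\ref{P0-3}) we have $B = -AG\Gamma^T$ with $G = (A^T A + \Gamma^T \Gamma)^{-1}$, so $Ay + Bz = A\,(y - G\Gamma^T z)$. Setting $\hat x := y - G\Gamma^T z \in \R^n$, the right-hand side of (\ref{plan-a}) becomes $\p A\hat x - b \p^2$, whence
\be
\label{plan-b}
\phi(y,z) \;\geq\; \p A\hat x - b \p^2 \;\geq\; \min \{ \p Ax - b \p^2, \; x \in \R^n \}.
\ee

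Finally I would identify the minimum. As $Ax$ ranges over $\cR(A)$, the standard characterization of the least-squares residual (cf.~(\ref{intro-3})) gives $\min \{ \p Ax - b \p^2 : x \in \R^n \} = \p b - P_{\cR(A)}(b) \p^2 = \p (I - P_{\cR(A)})\, b \p^2$, and since the orthogonal complement of $\cR(A)$ is $\cN(A^T)$ this equals $\p P_{\cN(A^T)}(b) \p^2$. Chaining this with (\ref{plan-b}) yields (\ref{0-6}).

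I do not expect a genuine obstacle here. The only points needing (routine) care are the validity of the block expansion in (\ref{plan-a}) and the correct use of the explicit form $B = -AG\Gamma^T$ established in Lemma~\ref{newlem2}; everything else is the textbook description of the least-squares residual as the distance from $b$ to $\cR(A)$.
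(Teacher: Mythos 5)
Your proposal is correct and follows essentially the same route as the paper: drop the second residual block, use $B=-AG\Gamma^{T}$ from (\ref{P0-3}) to rewrite $Ay+Bz=A(y-G\Gamma^{T}z)$, and bound below by the least-squares minimum, identified with $\p P_{\cN(A^T)}(b)\p^{2}$. The only (harmless) difference is that you spell out the textbook identification of the minimum via $P_{\cR(A)}$, which the paper states without comment.
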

\begin{proof}
From (\ref{P4}) and (\ref{P0-3}) we get
$$
\phi(y, z) ~\geq~ \p Ay + Bz - b \p^2 = \p Ay - A G \Gamma^T z - b \p^2 =
$$
\be
\label{0-7}
\p A(y -  G \Gamma^T z) - b \p^2 ~\geq~ \min \{ \p Ax - b \p^2, x \in \R^n \} = \p P_{\cN(A^T)}(b) \p^2,
\ee
and the proof is complete.
\end{proof}
\begin{lemma}
\label{newlem4}
Let  $x_{LS}$ be the (unique) solution of the problem (\ref{ali-12}), $\Omega$ the (invertible) matrix from (\ref{1000}) and $y, z$ defined by 
\be
\label{0-8}
y = \Omega^{-1} x_{LS}, ~~~z = -S^{-1} \Gamma y.
\ee
Then
\be
\label{0-9}
\phi(y, z) = \p Ax_{LS} - b \p^2 = \p P_{\cN(A^T)}(b) \p^2.
\ee
\end{lemma}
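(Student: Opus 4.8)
The plan is to substitute the prescribed $y$ and $z$ straight into $\phi$ and simplify, leaning entirely on facts already in hand: the reduced form (\ref{P4}) of $\phi$ when $f=0$, the identity $B=-AG\Gamma^T$ from (\ref{P0-3}), and the characterization (\ref{intro-3}) of the least squares solution. First I would recall from (\ref{P4}) that for the choice $f=0$
\[
\phi(y,z) = \p Ay + Bz - b \p^2 + \p \Gamma y + Sz \p^2 .
\]
By the definition $z = -S^{-1}\Gamma y$ in (\ref{0-8}) the second summand equals $\p \Gamma y - \Gamma y \p^2 = 0$, so $\phi(y,z) = \p Ay + Bz - b \p^2$.

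Next I would use $B = -AG\Gamma^T$ with $G=(A^TA+\Gamma^T\Gamma)^{-1}$, from (\ref{P0-3}), to compute
\[
Ay + Bz = Ay - AG\Gamma^T z = Ay + AG\Gamma^T S^{-1}\Gamma y = A\bigl(I + G\Gamma^T S^{-1}\Gamma\bigr) y = A\,\Omega\, y ,
\]
recognizing the matrix $\Omega$ of (\ref{1000}). Since $y = \Omega^{-1} x_{LS}$, this collapses to $Ay + Bz = A\Omega\Omega^{-1} x_{LS} = A x_{LS}$, hence $\phi(y,z) = \p A x_{LS} - b \p^2$. All the inverses invoked here are legitimate: $S^{-1}$ by Lemma \ref{Plem2}(ii), $\Omega^{-1}$ by the similarity argument (\ref{1000})--(\ref{1002}), and $G$ by the full column rank of $\bar A$.

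Finally, because $x_{LS}$ solves the least squares problem (\ref{ali-12}), relation (\ref{intro-3}) gives $A x_{LS} = P_{\cR(A)}(b)$, so $A x_{LS} - b = -\bigl(I - P_{\cR(A)}\bigr)(b) = -P_{\cN(A^T)}(b)$, and therefore $\p A x_{LS} - b \p^2 = \p P_{\cN(A^T)}(b) \p^2$, which is exactly the minimum value appearing in (\ref{0-6}). This yields (\ref{0-9}). I do not expect any real obstacle: the argument is a short direct computation, and the only points needing a word of justification are that the three inverses are well defined, which is already on record. It is worth noting in passing that combining this lemma with Lemma \ref{newlem3} shows the pair $(y,z)$ of (\ref{0-8}) is in fact a global minimizer of $\phi$.
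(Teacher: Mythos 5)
Your proposal is correct and follows essentially the same route as the paper's proof: substitute $y=\Omega^{-1}x_{LS}$, $z=-S^{-1}\Gamma y$ into $\phi$, observe that the second summand $\p \Gamma y + Sz\p^2$ vanishes, use $B=-AG\Gamma^T$ to collapse $Ay+Bz$ to $A\Omega y = Ax_{LS}$, and conclude via the least squares characterization. Your explicit justification of the final equality $\p Ax_{LS}-b\p = \p P_{\cN(A^T)}(b)\p$ through (\ref{intro-3}) is a welcome touch the paper leaves implicit.
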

\begin{proof}
For $\left[ 
		\begin{matrix}
		y \\
		z \\
		\end{matrix}
		\right]$, with $y, z$ as in (\ref{0-8}) we successively get
$$
\left \| \left[ 
		\begin{matrix}
		A & B \\
		\Gamma & S \\
		\end{matrix}
		\right] \left[ 
		\begin{matrix}
		y \\
		 z \\
		\end{matrix}
		\right] - \left[ 
		\begin{matrix}
		b \\
		0 \\
		\end{matrix}
		\right] \right \|^2 = \p Ay + Bz - b\p^2 + \p \Gamma y + Sz \p^2 = 
$$
$$
		\p Ay -A (A^T A + \Gamma^T \Gamma)^{-1} \Gamma^T z - b \p^2
		 + 0 = 
$$
$$
		\p Ay -A (A^T A + \Gamma^T \Gamma)^{-1} \Gamma^T (-S^{-1} \Gamma y) - b \p^2
$$
$$
		\p A(I + (A^T A + \Gamma^T \Gamma)^{-1} \Gamma^T S^{-1} \Gamma) y - b \p^2 = 
$$
\be \label{P6}
	\p Ax_{LS} - b \p^2  = 
		\p  P_{\cN(A^T)}(b) \p^2.
\ee
\end{proof}
From the above lemmas it results that 
$\left[ 
\begin{matrix}
		y \\
		z \\
		\end{matrix}
\right]_{LS}$
is the (unique) minimal norm solution of the problem (\ref{ali-22})
with $f=0$ (see also (\ref{P4})) if and only if
\be
\label{0-10}
\left \| \left[ 
		\begin{matrix}
		A & B \\
		\Gamma & S \\
		\end{matrix}
		\right] \left[ 
		\begin{matrix}
		y \\
		 z \\
		\end{matrix}
		\right] - \left[ 
		\begin{matrix}
		b \\
		0 \\
		\end{matrix}
		\right] \right\|^2 = \p P_{\cN(A^T)}(b) \p^2.
\ee
Therefore we can conclude with the following result, which gives us a  direct connection between the augmented  problem (\ref{ali-22}) with $f=0$  and  (\ref{ali-12}).
\begin{lemma}
\label{newlem5}
 The following properties hold.\\
(i) Let $x_{LS}$ be the minimal norm solution of the problem (\ref{ali-12}) and $y, z$ defined by (\ref{0-8}). Then $\left[ 
		\begin{matrix}
		y \\
		z \\
		\end{matrix}
		\right]_{LS}$ is the minimal norm solution of the problem (\ref{ali-22});\\
(ii) Let $\left[ 
		\begin{matrix}
		y \\
		z \\
		\end{matrix}
		\right]_{LS}$ be the minimal norm solution of the problem (\ref{ali-22}). Then $z = -S^{-1} \Gamma y$ and $x_{LS}$ given by 
\be
\label{0-11}
x_{LS} = \Omega y = \left ( I + (\bar{A}^T \bar{A})^{-1} \Gamma^T S^{-1} \Gamma \right ) y
\ee
is the minimal norm solution of the problem (\ref{ali-12}).\\
(iii) The minimal norm solution of the problem (\ref{ali-22}), $\left[ 
		\begin{matrix}
		y \\
		z \\
		\end{matrix}
		\right]_{LS}$ is given by
\be
\label{eqfinal-1}
\left[ 
		\begin{matrix}
		y \\
		z \\
		\end{matrix}
		\right]_{LS} =  \left[ 
		\begin{matrix}
		(\bar{A}^T \bar{A})^{-1} A^T b \\
		-S^{-1} \Gamma  (\bar{A}^T \bar{A})^{-1} A^T b \\
		\end{matrix}
		\right]  
\ee
with
\be
\label{eqfinal-2}
(\bar{A}^T \bar{A})^{-1} = \left[ 
		\begin{matrix}
		(\bar{A}_1^T \bar{A}_1)^{-1} & 0 & 0 & \dots & 0 \\
		0 & (\bar{A}_2^T \bar{A}_2)^{-1} & 0 & \dots & 0 \\
		\dots & \dots & \dots & \dots & \dots \\
		0 & 0 & 0 & \dots & (\bar{A}_p^T \bar{A}_p)^{-1} \\
		\end{matrix}
		\right] 
		\ee
\end{lemma}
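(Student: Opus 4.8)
The plan is to deduce (i) directly from Lemmas~\ref{newlem3} and \ref{newlem4}, to get (ii) from (i) by a uniqueness argument, and to read (iii) off the decoupled normal equation (\ref{fcr-5}). Two facts will be used throughout. First, by Lemma~\ref{newlem1} the coefficient matrix of (\ref{ali-22}) has full column rank, so the least squares problem (\ref{ali-22}) has a \emph{unique} solution; hence its minimal norm least squares solution is simply that unique solution, and likewise the minimal norm solution of (\ref{ali-12}) is the unique solution of the normal equation $A^T A x = A^T b$, $A$ having full column rank. Second, I will freely use the identifications $\bar A^T \bar A = A^T A + \Gamma^T \Gamma$ (from (\ref{1-6})), $S = I - \Gamma(\bar A^T \bar A)^{-1}\Gamma^T$ and $B^T = -\Gamma(\bar A^T\bar A)^{-1}A^T$ (from (\ref{1-88}) and (\ref{P0-3})), together with the invertibility of $S$ (Lemma~\ref{newlem1}(ii)).

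For (i): the functional $\phi$ of (\ref{0-5}) is exactly the squared residual of (\ref{ali-22}) with $f=0$, expanded in (\ref{P4}). Lemma~\ref{newlem3} gives $\phi(y,z)\ge\|P_{\cN(A^T)}(b)\|^2$ for all $(y,z)$, and Lemma~\ref{newlem4} shows the pair $y=\Omega^{-1}x_{LS}$, $z=-S^{-1}\Gamma y$ of (\ref{0-8}) attains that value. Therefore this pair minimizes $\phi$, i.e.\ it is a least squares solution of (\ref{ali-22}) with $f=0$, and by the uniqueness recalled above it is \emph{the} minimal norm solution, which is (i).

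For (ii): I would take $x_{LS}$ to be the unique solution of (\ref{ali-12}) and apply (i) to it. The resulting pair $\big(\Omega^{-1}x_{LS},\,-S^{-1}\Gamma\Omega^{-1}x_{LS}\big)$ is the unique minimal norm solution of (\ref{ali-22}), hence it must coincide with the given $\left[\begin{matrix}y\\z\end{matrix}\right]_{LS}$. Comparing the two components yields $z=-S^{-1}\Gamma y$ and $y=\Omega^{-1}x_{LS}$, and inverting the latter gives $x_{LS}=\Omega y$; rewriting $\Omega$ via $\bar A^T\bar A=A^T A+\Gamma^T\Gamma$ is then (\ref{0-11}). A self-contained alternative, avoiding re-use of (i): Lemma~\ref{newlem2} with $f=0$ already gives $z=-S^{-1}\Gamma y$; then the top block of (\ref{fcr-5}) reads $(\bar A^T\bar A)y=A^T b$, and combining it with the identity $\Gamma(\bar A^T\bar A)^{-1}\Gamma^T=I-S$ a short computation shows $A^T A\,\Omega y=A^T b$, so full column rank of $A$ forces $\Omega y=x_{LS}$.

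For (iii): with $f=0$ the normal equation (\ref{fcr-5}) decouples into $(\bar A^T\bar A)y_{LS}=A^T b$ and $S z_{LS}=B^T b$, so $y_{LS}=(\bar A^T\bar A)^{-1}A^T b$ and, substituting $B^T=-\Gamma(\bar A^T\bar A)^{-1}A^T$, $z_{LS}=-S^{-1}\Gamma(\bar A^T\bar A)^{-1}A^T b$, which is (\ref{eqfinal-1}). For (\ref{eqfinal-2}), the mutual orthogonality (\ref{fcr-4}) of the column blocks $\bar A_i$ makes $\bar A^T\bar A$ block diagonal with diagonal blocks $\bar A_i^T\bar A_i$; since $\bar A^T\bar A$ is invertible each block is invertible, and the inverse of a block diagonal matrix is the block diagonal of the inverses. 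I expect no single hard step here: the analytic work has already been carried by Lemmas~\ref{newlem2}--\ref{newlem4}, and the only points needing care are (a) invoking full column rank of the augmented matrix — ultimately the invertibility of $S$ — both to identify a minimizer of $\phi$ with \emph{the} minimal norm solution and to run (ii) backwards, and (b) checking that the two expressions $z=-S^{-1}\Gamma y$ and $z=S^{-1}B^T b$ for the second block agree, which is exactly what $B^T=-\Gamma(\bar A^T\bar A)^{-1}A^T$ guarantees.
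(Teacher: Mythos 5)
Your proposal is correct and follows essentially the same route as the paper: (i) from the lower bound of Lemma~\ref{newlem3} plus the attainment in Lemma~\ref{newlem4} together with uniqueness from full column rank, (iii) from the decoupled normal equation (\ref{new7}) with $f=0$, the identity $B^T=-\Gamma(\bar A^T\bar A)^{-1}A^T$ from (\ref{P0-3}), and block-diagonality of $\bar A^T\bar A$ via (\ref{fcr-4}). Your treatment of (ii) is a slight streamlining — identifying the given minimizer with the pair from (i) by uniqueness (or, alternatively, verifying $A^TA\,\Omega y=A^Tb$ directly), whereas the paper re-derives $z=-S^{-1}\Gamma y$ from Lemma~\ref{newlem2} and replays the residual computation (\ref{P6}) — but this is a cosmetic difference, not a different method.
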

\begin{proof}
(i) It results directly from (\ref{newlem4}) and (\ref{0-10}).\\
(ii) Because $f=0$, according to Lemma \ref{newlem2} we get $\Gamma y + S z = 0$. Hence $z = -S^{-1} \Gamma y$. Moreover, replaying the calculations in (\ref{P6}) and also using (\ref{0-10}) and (\ref{0-11}) we obtain
$$
\p P_{\cN(A^T)}(b) \p^2 = \left \| \left[ 
		\begin{matrix}
		A & B \\
		\Gamma & S \\
		\end{matrix}
		\right] \left[ 
		\begin{matrix}
		y \\
		 z \\
		\end{matrix}
		\right] - \left[ 
		\begin{matrix}
		b \\
		0 \\
		\end{matrix}
		\right] \right \|^2 = 
		$$
		$$
		\p A(I + (A^T A + \Gamma^T \Gamma)^{-1} \Gamma^T S^{-1} \Gamma) y - b \p^2 = \p A \Omega y - b \p^2 = 
		\p A x - b \p^2,
$$
which tells us that $x$ is the minimal norm solution of (\ref{ali-12}) (see also (\ref{1000})).\\
(iii) Equation (\ref{eqfinal-1}) holds from the assumption $f=0$, (\ref{new7}), (\ref{P0-3}) and equation (\ref{eqfinal-2}) from (\ref{fcr-4}).
\end{proof}

\paragraph{Solution Procedure.}
The algorithm proposed  to compute a solution of the initial system (\ref{ali-11}) through the minimal norm solution of the augmented system (\ref{ali-22}) (with $f=0$) is the following.
		\begin{itemize}
		\item[(2.1)] The minimal norm solution $\left[ 
		\begin{matrix}
		y \\
		z \\
		\end{matrix}
		\right]_{LS}$ of the problem (\ref{ali-22}) is computed (in parallel) by (\ref{eqfinal-1}) - (\ref{eqfinal-2}).
		\item[(2.2)] The minimal norm solution $x_{LS}$ of the problem (\ref{ali-12}) is computed (in parallel) by (\ref{0-11}).
		\item[(2.3)] The minimal norm solution $\tilde{x}_{LS}$ of the  initial problem (\ref{ali-11}) is computed ( by only a permutation of components) by (\ref{erlg3}).
		\end{itemize}
		
%\todoURI{I am wondering, whether an approximate othogonality in the columns would also work.
%This may be relevant if we do not have perfect component 
%solvers but some that are subject to roundoff errors, and of course when we want to devise
%iterative procedures based on approximate forms of these ideas} 
%
%\textcolor{red}{{\bf Constantin: I have already discussed this aspect with Daniel. This is an important, both theoretical and computational aspect. I think that this will/can be the next development of the paper. If you consider necessary, you may add that ``As a near future working plan, we are thinking about.....''}}.

\section{Conclusions}
The article has provided insight into projection methods that are applied to augmented systems
for both the underdetermined case as well as for overdetermined systems.
These results are the basis for extensions to make the methods relevant for practice.
Such techniques must exploit parallel computing, a topic outside the scope of this work.

The augmentation should  be problem specific. In
the case that the system originates from a discretization of a PDE,
an efficient augmentation can be derived from a domain decomposition.
Other interesting extensions include relaxing the strict orthogonality between
the augmented rows (or columns) to an only approximate orthogonality.
The resulting method will then 
not be a direct solver, but an iterative one. The study of such methods
is left to future work that can be based on the results of this article.

\end{document}